\def \vep {\varepsilon}
\def \cW {\mathcal{W}}
\theoremstyle{plain}
\newtheorem{theorem}{Theorem}[section]
\newtheorem{lemma}[theorem]{Lemma}
\newtheorem{proposition}[theorem]{Proposition}
\newtheorem{corollary}[theorem]{Corollary}
\newtheorem{maintheorem}{Theorem}
\newtheorem{definition}[theorem]{Definition}
\newtheorem{remark}[theorem]{Remark}
\numberwithin{equation}{section}
\newcommand{\intav}[1]{\mathchoice {\mathop{\vrule width 6pt height 3 pt depth  -2.5pt
\kern -8pt \intop}\nolimits_{\kern -6pt#1}} {\mathop{\vrule width
5pt height 3  pt depth -2.6pt \kern -6pt \intop}\nolimits_{#1}}
{\mathop{\vrule width 5pt height 3 pt depth -2.6pt \kern -6pt
\intop}\nolimits_{#1}} {\mathop{\vrule width 5pt height 3 pt depth
-2.6pt \kern -6pt \intop}\nolimits_{#1}}}
\title[Generic homeomorphisms have full metric mean dimension]{Generic homeomorphisms have full \\ metric mean dimension}
\begin{document}

\author[M. Carvalho]{Maria Carvalho}
\address{CMUP \& Departamento de Matem\'atica, Universidade do Porto, Portugal.}
\email{mpcarval@fc.up.pt}

\author[F. Rodrigues]{Fagner B. Rodrigues}
\address{Departamento de Matem\'atica, Universidade Federal do Rio Grande do Sul, Brazil.}
\email{fagnerbernardini@gmail.com}

\author[P.Varandas]{Paulo Varandas}
\address{CMUP - Universidade do Porto \& Departamento de Matem\'atica e Estat\'istica, Universidade Federal da Bahia, Brazil.}
\email{paulo.varandas@ufba.br}

\date{\today}
\thanks{The authors have been supported by CMUP (UID/MAT/00144/2019), which is funded by FCT with national (MCTES) and European structural funds through the programs FEDER, under the partnership agreement PT2020. PV was partially supported by Funda\c c\~ao para a Ci\^encia e Tecnologia (FCT) - Portugal, through the grant CEECIND/03721/2017 of the Stimulus of Scientific Employment, Individual Support 2017 Call}
\keywords{Metric mean dimension; Pseudo-horseshoe; Topological dynamics.}
\subjclass[2010]{
Primary:
37C45,  
54H20.   	
Secondary:
37B40,  
54F45.  
}

\maketitle
\begin{abstract}
We prove that the upper metric mean dimension of $C^0$-generic homeomorphisms, acting on a compact smooth boundaryless manifold with dimension greater than one, coincides with the dimension of the manifold. In the case of continuous interval maps we also show that each level set for the metric mean dimension is $C^0$-dense in the space of continuous endomorphisms of $[0,1]$ with the uniform topology.

\end{abstract}


\section{Introduction}

The topological entropy is an invariant by topological conjugation and a very useful tool to either measure how chaotic is a dynamical system or to attest that two dynamics are not conjugate. It counts, in exponential scales, the number of distinguishable orbits up to arbitrarily small errors. Clearly, on a compact metric space, a Lipschitz map has finite topological entropy. However, if the dynamics is just continuous, the topological entropy may be infinite. Actually, K. Yano proved in \cite{Yano} that, on compact smooth manifolds with dimension greater than one, the set of homeomorphisms having infinite topological entropy are $C^0$-generic. So the topological entropy is no longer an effective label to classify them.
\smallskip

In order to obtain a new invariant for maps with infinite entropy, E. Lindenstrauss and B. Weiss introduced in \cite{LW2000} the notions of \emph{upper metric mean dimension} and \emph{lower metric mean dimension} of an endomorphism $f$ of a metric space $(X,d)$, that we will denote by $\overline{\mathrm{mdim}_M}\,(X,f,d)$ and $\underline{\mathrm{mdim}_M}\,(X,f,d)$, respectively. These are metric versions of the \emph{mean dimension}, a concept proposed by M. Gromov in \cite{Gr1999} which may be viewed as a dynamical analogue of the topological dimension. In particular, it is known that the mean dimension of a homeomorphism $f: X \to X$ acting on a topological space $X$ of finite dimension is zero.
An extension of this notion to $\mathbb Z^k$-actions can be found in \cite{GLT}.
The upper and lower metric mean dimensions, unlike Gromov's concept, depend on the metric adopted on the space and are nonzero only if the topological entropy of the dynamics is infinite.

\smallskip

More recently, it was proved in \cite{VV} that, on a compact manifold with dimension greater than one, having positive upper metric mean dimension is a $C^0$-dense property in the whole class of homeomorphisms. Moreover, the authors established that the set of homeomorphisms with metric mean dimension equal to the dimension of the manifold is $C^0$-dense in the set of all the homeomorphisms with a fixed point. Unfortunately, the previous subset is not $C^0$-dense in the space of homeomorphisms. The existence of a fixed point is crucial due to the need of an adequate construction of separated sets using the pseudo-horseshoes introduced in \cite{Yano}. If, instead, $f$ admits a periodic point of period $p>1$, then the argument of \cite{VV} ensures that
$\mathrm{mdim}_M\,(f^p, X, d) = \mathrm{dim} X$
hence, as $\mathrm{mdim}_M\,(f^p, X, d) \leqslant p\,\,\mathrm{mdim}_M\,(f, X, d)$,
\begin{equation}\label{eq:LB}
\mathrm{mdim}_M\,(X,f,d) \geqslant \frac{\mathrm{dim} X}{p}.
\end{equation}
Therefore, in order to be able to consider homeomorphisms with periodic points of arbitrarily large periods
(actually the $C^0$ generic case, as proved in \cite{H1996}) and still obtain $\mathrm{mdim}_M\,(X,f,d) = \mathrm{dim} X$, one must compensate for the loss of metric mean dimension caused by their likely long periods. In this work we show that for $C^0$-generic homeomorphisms, acting on compact smooth boundaryless manifolds with dimension greater than one, not only the metric mean dimension is positive but it is equal to the dimension of the manifold. Our argument grew out of the results of \cite{H1996}, \cite{Yano} and \cite{Lind1999}, to which we refer the reader for more background.
\smallskip

Let us be more precise. It is known, after \cite[Proposition~2]{Yano}, that for any homeomorphism $f$, any scale $\delta>0$ and any $N \in \mathbb{N}$ there exist a $C^0$-arbitrary small perturbation $g$ of $f$ and a suitable iterate $g^k$ which has a compact invariant subset semi-conjugate to a subshift of finite type with $N^k$ symbols. This ensures the existence of some scale $\vep_0 > 0$ (depending on $N$ and $f$) such that the largest cardinality of any $(n,\vep)$ separated subset of $X$ with respect to $g$ satisfies $s(g,n,\vep) \geqslant N^n$ for every $\vep \leqslant \vep_0$ and all big enough $n$; so $h_{\mathrm{top}}(g) \geqslant \log N$. Although Yano's strategy succeeds in producing homeomorphisms $C^0$ close to $f$ with arbitrarily large topological entropy, it fails to bring forth any lower bound on their metric mean dimension since there exists no explicit relation between $\vep_0$ and $N$. To obtain better estimates than \eqref{eq:LB} for the metric mean dimension, we endeavored to find such a connection in $\mathbb{R}^{\mathrm{dim} X}$, and then forwarded the conclusions to the manifold $X$ using the bi-Lipschitz nature of the charts. We have had to perform several $C^0$-small perturbations along the orbit of a periodic point (reminding the global changes done in the proof of Pugh's $C^1$ Closing Lemma \cite{Pugh}) in order to build a new version of the pseudo-horseshoes used in \cite{Yano}, now obliged to satisfy two conditions: to exist in all sufficiently small scales and to exhibit the needed separation in all moments of the construction. We will be back to this issue on Section~\ref{sec:perturbative}.
\smallskip

The second question we address here concerns the space of continuous endomorphisms of the interval $[0,1]$ with the uniform metric, denoted by $C^0([0,1])$. Adjusting the construction of horseshoes done by M. Misiurewicz in \cite{Mz2010}, which paved the way to prove that the topological entropy of maps of the interval is lower semicontinuous and upper bounded by the exponential growth rate of the periodic points, the authors of \cite{VV} showed that the subset of those maps with maximal upper metric mean dimension (whose value is $1$) is dense in $C^0([0,1])$ with the uniform metric. A finer construction allowed us to prove that, for every $0\leqslant \beta \leqslant 1$, the level set of continuous maps for which the metric mean dimension exists and is equal to $\beta$ is a dense subset of $C^0([0,1])$. For more details we refer the reader to Section~\ref{se:Proof of Thm B}.

\section{Upper and lower metric mean dimension}\label{sec:definitions}

Most of the results we will use or prove require some mild homogeneity of the space so that local perturbations can be made. For simplicity we consider here only the case of smooth compact connected manifolds. Let $X$ be such a  manifold and $d$ be a metric compatible with the topology on $X$. Given a continuous map $f \colon X \to X$ and a non-negative integer $n$, define the dynamical metric $d_n \colon X \times X \, \to \,[0,+\infty)$ by
$$d_n(x,y)=\max\,\Big\{d(x,y),\,d(f(x),f(y)),\,\dots,\,d(f^{n-1}(x),f^{n-1}(y))\Big\}$$
and denote by $B_f(x,n,\vep)$ the ball of radius $\vep$ around $x\in X$ with respect to the metric $d_n$. It is not difficult to check that $d_n$ generates the same topology as $d$.

Having fixed $\varepsilon>0$, we say that a set $A \subset X$ is $(n,\varepsilon)$-separated by $f$ if $d_n(x,y) > \varepsilon$ for every $x,y \in A$. Denote by $s(f,n,\varepsilon)$ the maximal cardinality of all $(n,\varepsilon)$-separated subsets of $X$ by $f$. Due to the compactness of $X$, the number $s(f,n,\varepsilon)$ is finite for every $n \in \mathbb{N}$ and $\varepsilon >0$.

\begin{definition}\label{metric-mean}
The \emph{lower metric mean dimension} of $(f, X, d)$ is given by
$$\underline{\mathrm{mdim}_M}\,(X,f,d) = \liminf_{\varepsilon \,\to \,0} \,\frac{h(f,\varepsilon)}{|\log \varepsilon|}$$
where $h(f,\varepsilon) = \limsup_{n\, \to\, +\infty}\,\frac{1}{n}\,\log s(f,n,\varepsilon)$. Similarly, the \emph{upper metric mean dimension} of $(X,f,d)$ is the limit
$$\overline{\mathrm{mdim}_M}\,(X,f,d) = \limsup_{\varepsilon\,\to\, 0} \,\frac{h(f,\varepsilon)}{|\log \varepsilon|}.$$
\end{definition}
\medskip

The upper/lower metric mean dimensions satisfy the following properties we may summon later:
\begin{enumerate}
\item If the topological entropy $h_{\mathrm{top}}(f)= \lim_{\varepsilon \, \to \, 0}\,h(f,\varepsilon)$ is finite (as when $f$ is a Lipschitz map on a compact metric space), then
$$\underline{\mathrm{mdim}_M}\,(X,f,d) = \overline{\mathrm{mdim}_M}\,(X,f,d) = 0.$$
\item Given two continuous maps $f_1: X_1 \to X_1$ and $f_2: X_2 \to X_2$ on compact metric spaces $(X_1, d_1)$ and $(X_1, d_1)$, then
$$\quad \quad \overline{\mathrm{mdim}_M}\,(X_1 \times X_2, f_1 \times f_2, d_1 \times d_2) = \overline{\mathrm{mdim}_M}\,(X_1, f_1, d_1) + \overline{\mathrm{mdim}_M}\,(X_2, f_2, d_2).$$
\item Given a continuous map $f: X \to X$ on a compact metric space $(X, d)$, the box dimension of $(X,d)$ is an upper bound for $\overline{\mathrm{mdim}_M}\,(X, f, d)$ (cf. Remark 4 of \cite{VV}).
\smallskip
\item Let $f: X \to X$ be a continuous map on a compact metric space $(X, d)$ and $k$ be a positive integer. The inequality
$$\overline{\mathrm{mdim}_M}\,(X, f^k, d) \leqslant k \, \overline{\mathrm{mdim}_M}\,(X,f,d)$$
is always valid (the proof is similar to the one done for the entropy in \cite{W1982}). The equality may fail (see the previous item), though it is valid whenever $f$ is Lipschitz, in which case these values are zero for every $k \in \mathbb{N}$.
\medskip
\item For every continuous map $f: X \to X$ on a compact metric space $(X, d)$, one has
    $$\overline{\mathrm{mdim}_M}(\Omega(f), f\mid_{\Omega(f)}, d)= \overline{\mathrm{mdim}_M}(X, f, d)$$
where $\Omega(f)$ stands for the set of non-wandering points of $f$.
\medskip
\item Given a continuous map $f: X \to X$ on a compact metric space $X$,
$$\mathrm{mdim}(X,f) \leqslant \underline{\mathrm{mdim}_M}\,(X,f,d) \leqslant \overline{\mathrm{mdim}_M}\,(X,f,d)$$
for every metric $d$ on $X$ compatible with the topology of $X$ (cf.~\cite[Theorem~4.2]{LW2000}), where $\mathrm{mdim}(X,f)$ stands for the mean dimension of $f$. The existence of such a metric for which the first equality holds is conjectured for general maps (cf. \cite{LindTsu}); it is known to be valid in the case of minimal systems (cf. Theorem 4.3 in \cite{Lind1999}).
\end{enumerate}

\section{Main results}\label{sec:statements}

Denote by $\mathrm{Homeo}(X,d)$ the set of homeomorphisms of $(X,d)$. This is a complete metric space if endowed with the metric
$$D(f,g)= \max_{x\,\in \,X}\,\big\{d(f(x),g(x)),\,d(f^{-1}(x),g^{-1}(x))\big\}.$$
It is known from \cite{VV} that the upper metric mean dimension of every $f \in \mathrm{Homeo}(X,d)$ cannot be bigger than the dimension of the manifold $X$. Our first result states that typical homeomorphisms have the largest upper metric mean dimension. We note that it is not clear whether a similar statement for the lower metric mean dimension should hold.

\begin{maintheorem}\label{thm:main}
Let $(X,d)$ be a compact smooth boundaryless manifold with dimension strictly greater than one and whose topology is induced by a distance $d$. There exists a $C^0$-Baire residual subset $\mathfrak R \subset \mathrm{Homeo}(X,d)$ such that
$$\overline{\mathrm{mdim}_M}\,(X,f,d) = \mathrm{dim} X \quad \quad \forall \,f \in \mathfrak R.$$
\end{maintheorem}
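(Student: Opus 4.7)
The plan is to set up a Baire framework and concentrate the real work in a density step based on a refined pseudo-horseshoe construction. Since the box dimension of a compact smooth manifold $X$ equipped with a compatible distance equals $\dim X$, property~(3) of Section~\ref{sec:definitions} already gives $\overline{\mathrm{mdim}_M}(X,f,d) \leqslant \dim X$ for every $f \in \mathrm{Homeo}(X,d)$, so only the reverse inequality has to be secured on a residual set. I would define, for integers $m, k \geqslant 1$,
$$\mathcal U_{m,k} \defeq \bigcup_{\varepsilon \in (0,1/k) \cap \mathbb Q} \ \bigcup_{n \geqslant 1} \bigl\{\, f \in \mathrm{Homeo}(X,d) : s(f,n,\varepsilon) > \varepsilon^{-n(\dim X - 1/m)} \,\bigr\}$$
and put $\mathfrak R \defeq \bigcap_{m,k \geqslant 1} \mathcal U_{m,k}$. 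A routine continuity argument (if $A$ is $(n,\varepsilon)$-separated for $f$, then $A$ is $(n,\varepsilon')$-separated for every $g$ in a suitable $C^0$-neighbourhood and every $\varepsilon' < \varepsilon$), together with the union over a countable dense set of scales, makes each $\mathcal U_{m,k}$ open. Provided the density inclusion is realised via a genuine Markov-type horseshoe — so that the separated-set lower bound iterates to all large $n$ rather than holding at just one $n$ — every $f \in \mathfrak R$ satisfies $\limsup_{\varepsilon \to 0} h(f,\varepsilon)/|\log \varepsilon| \geqslant \dim X - 1/m$ for every $m$, whence $\overline{\mathrm{mdim}_M}(X,f,d) = \dim X$.

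The density of each $\mathcal U_{m,k}$ is the substantive step. Given $f_0 \in \mathrm{Homeo}(X,d)$ and $\delta > 0$, I would first invoke Hurley's genericity result \cite{H1996} (or a direct $\delta/3$-perturbation) to obtain $f_1$ with $D(f_0,f_1)<\delta/3$ having some periodic point $p$ of period $q$, and fix bi-Lipschitz charts $\varphi_i : U_i \to Q_i \subset \mathbb R^{\dim X}$ around each orbit point $p_i = f_1^{\,i}(p)$ with uniform distortion and $\mathrm{diam}(U_i)$ small. For a large integer $N$ to be tuned at the end, I would subdivide each $Q_i$ into $N^{\dim X}$ equal sub-cubes of side $\asymp 1/N$ and, through a sequence of $C^0$-small perturbations localised one around each orbit point (mimicking the orbit-wide spreading of corrections in Pugh's $C^1$ Closing Lemma \cite{Pugh}), produce $g$ with $D(f_0,g) < \delta$ such that for every $i$ and every sub-cube $R$ of $Q_i$ the image $g(\varphi_i^{-1}(R))$ crosses $U_{i+1 \bmod q}$ in a Markov fashion. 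This yields a $g$-invariant compact set semi-conjugate to the full shift on $N^{\dim X}$ symbols at each step, hence
$$s(g,n,\varepsilon_N) \geqslant N^{n \dim X} \quad \text{for every large } n, \qquad \varepsilon_N \asymp 1/N,$$
and therefore $h(g,\varepsilon_N)/|\log \varepsilon_N| \geqslant (\dim X \cdot \log N)/(\log N + O(1))$. Taking $N$ large enough places $g$ in $\mathcal U_{m,k}$.

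The crux, and the main obstacle, is the perturbative step above. Yano's original pseudo-horseshoe \cite{Yano} produces $N^k$ symbols inside $f^k$ at a scale $\varepsilon_0$ whose dependence on $N$ is not quantified, so by itself it only yields $\mathrm{mdim}_M \geqslant \dim X / q$, as in \eqref{eq:LB}. The refinement I would aim for must (i) realise $N^{\dim X}$ branches \emph{at each} step of the periodic orbit rather than only per full return, so that the exponential separation rate is $\dim X \log N$ instead of $\dim X \log N / q$; and (ii) pin the separation scale to be genuinely of order $1/N$, so that the ratio $h(g,\varepsilon_N)/|\log \varepsilon_N|$ actually tends to $\dim X$ as $N \to \infty$. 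Both requirements force the perturbations to be distributed over all $q$ orbit points, with each modification of $C^0$-size bounded by $\delta$ independently of $N$, and rely on uniform bi-Lipschitz control of the charts $\varphi_i$ to transport the Euclidean horseshoe back to $(X,d)$. I expect this simultaneous balancing of perturbation size, period length, and separation scale to be the technical heart of Section~\ref{sec:perturbative}.
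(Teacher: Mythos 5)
Your high-level plan — upper bound from the box-dimension remark in \cite{VV}, Baire scheme, density via pseudo-horseshoes distributed along a periodic orbit, bi-Lipschitz charts to pin the scale — is the same as the paper's, and you correctly identify where the paper's effort is concentrated (Section~\ref{sec:perturbative}). However, the Baire scheme as you wrote it has a real gap. Your open set $\mathcal U_{m,k}$ is a union over $n$, so membership only records that the separated-set estimate holds at \emph{one} time $n$ and one scale $\varepsilon$. A map $f$ in the residual intersection $\mathfrak R=\bigcap \mathcal U_{m,k}$ need only satisfy this single-$n$ inequality; nothing forces $s(f,n,\varepsilon)$ to stay large as $n\to\infty$ at that scale, and a one-off bound $\frac{1}{n_0}\log s(f,n_0,\varepsilon)\geqslant(\dim X-\tfrac1m)|\log\varepsilon|$ does not bound $h(f,\varepsilon)=\limsup_{n}\frac1n\log s(f,n,\varepsilon)$ from below. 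Your parenthetical (``provided the density inclusion is realised via a genuine Markov-type horseshoe so that the lower bound iterates to all large $n$'') shows you sensed the issue, but it addresses the wrong object: the dense set may well consist of Markov maps, yet the residual set contains maps that merely lie in each open set, and those inherit only what the open condition records. The paper avoids this by letting the open set $\mathcal O(\varepsilon_k,\alpha)$ be ``has a coherent $(\delta,L\varepsilon_k,p,\alpha)$-pseudo-horseshoe''; this is a $C^0$-open \emph{structural} condition, and Proposition~\ref{prop:separated} converts it into the estimate $s(f,p\ell,C^{-1}\varepsilon)\geqslant N^{p\ell}$ \emph{for every} $\ell$, which is what makes the $\limsup$ in $n$ go through. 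You should replace the cardinality condition in $\mathcal U_{m,k}$ by the (open) existence of such a pseudo-horseshoe at some scale $<1/k$ with exponent parameter within $1/m$ of $\dim X$.

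A second, smaller caveat concerns the perturbative core. You propose to subdivide each chart domain into $N^{\dim X}$ sub-cubes and make each cross all sub-cubes of the next domain, with separation $\varepsilon_N\asymp 1/N$, which is the critical case. The paper deliberately introduces the exponent $\alpha$ strictly below $1$, creating only $\lfloor(1/\varepsilon)^{\alpha\dim X}\rfloor$ separated strips at scale $\varepsilon$, precisely to leave geometric room inside the small absorbing disk for the $\varepsilon$-separation to coexist with the crossing structure; the remark after Proposition~\ref{prop:exist-phs} explains that $\alpha<1$ is the packing constraint $\varepsilon<(4\delta)^{1/(1-\alpha)}$, which has no solution at $\alpha=1$. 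Your $N^{\dim X}$-branch, $1/N$-separated construction sits exactly at the limit of this constraint; the margin $1/m$ you already carry in $\mathcal U_{m,k}$ is what should absorb this, so you should make the subdivision count $N^{(\dim X-1/m)}$ or, equivalently, carry the paper's $\alpha<1$ and take a countable family $\alpha\in(0,1)\cap\mathbb Q$. Finally, the paper prepares the construction by working inside an absorbing disk of arbitrarily small diameter (Lemma~\ref{le:absorbing}), not merely near a periodic point; the absorbing structure is what makes the whole perturbation confined and quasi-robust under further $C^0$-small changes, and it is worth keeping rather than relying solely on Hurley's periodic point.
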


\medskip

Since the manifold $X$ has finite dimension (so its Lebesgue covering dimension is also finite), $\mathrm{mdim}(X,f)=0$ for every $f\in \mathrm{Homeo}(X,d)$ (cf. \cite{LW2000}). Moreover, one always has
$$\mathrm{mdim}(X,f) \leqslant \underline{\mathrm{mdim}_M}\,(X,f,d).$$
Therefore, if $f \in \mathfrak R$ then
$$0=\mathrm{mdim}(X,f) \leqslant \inf_{\rho}\,\overline{\mathrm{mdim}_M}\,(X,f,\rho) \leqslant \sup_{\rho}\,\overline{\mathrm{mdim}_M}\,(X,f,\rho) = \mathrm{dim} X$$
where the infimum and supremum are taken on the space of distances $\rho$ which induce the same topology on $X$ as $(X,d)$. Thus, generically in $\mathrm{Homeo}(X,d)$ either
$$\mathrm{mdim}(X,f) < \inf_{\rho} \,\overline{\mathrm{mdim}_M}\,(X,f,\rho)$$
or
$$\inf_{\rho}\,\overline{\mathrm{mdim}_M}\,(X,f,\rho) < \sup_{\rho}\,\overline{\mathrm{mdim}_M}\,(X,f,\rho).$$
If the conjecture mentioned in \cite{LindTsu} turns out to be true, then it is the latter inequality that holds $C^0$-generically.

\medskip

The second problem we address in this paper is closely related to the previous one. Indeed, not only the largest possible value of the metric mean dimension is significant on the space of dynamical systems. Actually, in the case of continuous maps on $[0,1]$ with the Euclidean metric $d$, each level set for the metric mean dimension
is relevant since it is dense in $C^0([0,1])$ with the uniform norm.%

\begin{maintheorem}\label{thm:main2} Let $C^0([0,1])$ be the space of continuous endomorphisms of the interval $([0,1],d)$, where $d$ stands for the Euclidean metric. For every $\beta \in [0,1]$ there exists a dense subset $\mathcal D_\beta\subset C^0([0,1])$ for the uniform metric such that
$$\underline{\mathrm{mdim}_M}\,([0,1], f, d) = \overline{\mathrm{mdim}_M}\,([0,1],f, d) = \beta \quad \quad \forall \,f \in \mathcal D_\beta.$$
Moreover, $C^0$-generically in $C^0([0,1])$ one has $\,\,\overline{\mathrm{mdim}_M}\,([0,1],f, d) =1$.
\end{maintheorem}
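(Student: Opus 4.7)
My plan is to treat the two assertions of Theorem~\ref{thm:main2} separately: first the residuality of $\{\overline{\mathrm{mdim}_M}(\cdot,d)=1\}$, and then the density of $\mathcal D_\beta$ for arbitrary $\beta\in[0,1]$.

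For the residuality claim, the crucial ingredient is that $f\mapsto s(f,n,\varepsilon)$ is lower semi-continuous in the $C^0$ topology: any $(n,\varepsilon)$-separated set for $f$ remains $(n,\varepsilon')$-separated for every $g$ sufficiently close to $f$ and every $\varepsilon'<\varepsilon$. Consequently, for each pair $m,k\in\mathbb N$ the set
$$\mathcal U_{m,k}=\Big\{f\in C^0([0,1]):\exists\,\varepsilon\in(0,1/m)\text{ with }h(f,\varepsilon)>(1-1/k)|\log\varepsilon|\Big\}$$
is open. Density of $\mathcal U_{m,k}$ is furnished by the $C^0$-density result of \cite{VV} for maps with maximal upper metric mean dimension, as such a map has $h(\cdot,\varepsilon)/|\log\varepsilon|$ arbitrarily close to $1$ along some sequence $\varepsilon\to 0$. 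The intersection $\bigcap_{m,k}\mathcal U_{m,k}$ is therefore residual, and on it $\overline{\mathrm{mdim}_M}\geq 1$; the reverse inequality is automatic from property~(3) of Section~\ref{sec:definitions}, since the box dimension of $[0,1]$ equals $1$.

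For the density of $\mathcal D_\beta$, the cases $\beta=0$ and $\beta=1$ are immediate (piecewise linear maps with finitely many branches are $C^0$-dense and Lipschitz, hence have vanishing metric mean dimension, while $\mathcal D_1$ is already dense by \cite{VV}), so only $\beta\in(0,1)$ requires new work. Given $f\in C^0([0,1])$ and $\delta>0$, I would uniformly approximate $f$ within $\delta/2$ by a piecewise-linear Lipschitz map $\tilde g$, and then modify $\tilde g$ inside a short sub-interval $J$, keeping the $C^0$-distance at most $\delta/2$, by planting affine Misiurewicz-type full $N_k$-horseshoes on pairwise disjoint sub-intervals $I_k\subset J$ with the calibration
$$N_k=2^k,\qquad \varepsilon_k=2^{-k/\beta},\qquad |I_k|=N_k\,\varepsilon_k=2^{-k(1-\beta)/\beta},$$
so that $\sum_k|I_k|$ can be made arbitrarily small. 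The $k$-th horseshoe alone yields $s(g,n,\varepsilon_k)\geq N_k^n$ and hence $h(g,\varepsilon_k)/|\log\varepsilon_k|\to\beta$; using the monotonicity of $\varepsilon\mapsto h(g,\varepsilon)$ and the fact that $|\log\varepsilon_{k+1}|/|\log\varepsilon_k|\to 1$, this promotes the sequential estimate to $\underline{\mathrm{mdim}_M}(g,d)\geq\beta$ on all small scales.

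The main obstacle is the matching upper bound $\overline{\mathrm{mdim}_M}(g,d)\leq\beta$: one must show that horseshoes $H_j$ with $j>k$ are too confined inside intervals of length $<\varepsilon_k$ to generate extra $\varepsilon_k$-separation, that $\sum_{j\leq k}N_j^n=O(N_k^n)$ absorbs the contributions of the horseshoes with $j<k$, and that the Lipschitz part of $g$ outside $\bigcup_k I_k$ contributes only subexponentially to $s(g,n,\varepsilon)$. Together these yield $h(g,\varepsilon)=\log N_k+o(1)$ for each window $\varepsilon\in[\varepsilon_{k+1},\varepsilon_k)$, whence $\overline{\mathrm{mdim}_M}(g,d)=\beta$, placing $g\in\mathcal D_\beta$ at $C^0$-distance less than $\delta$ from $f$. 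Making the upper estimate rigorous demands a careful refinement of the Misiurewicz-type construction of \cite{Mz2010} and \cite{VV} — coordinating slopes and lengths so that horseshoes at distinct scales genuinely do not interact — and is the technical heart of the proof, to be carried out in Section~\ref{se:Proof of Thm B}.
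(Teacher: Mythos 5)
Your treatment of the residuality claim is fine and runs parallel to the paper's (lower semicontinuity of $\vep\mapsto s(\cdot,n,\vep)$ plus density of $\mathcal D_1$). The problem is in the density construction, where the concrete calibration you propose provably fails, not merely requires further work.

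Consider the single \emph{full} $N_k$-horseshoe you plant on $I_k$: it is an expanding map of $I_k$ with $N_k$ branches of slope $N_k$, so $h(g\mid_{I_k},\vep')=\log N_k$ for \emph{every} scale $\vep'\leqslant |I_k|$, not only for $\vep'\approx\vep_k$. Consequently the ratio $h(g\mid_{I_k},\vep')/|\log\vep'|$ sweeps the entire range from $\log N_k/|\log\vep_k|=\beta$ (at $\vep'=\vep_k$) up to
\[
\frac{\log N_k}{|\log|I_k||}=\frac{\log N_k}{|\log\vep_k|-\log N_k}=\frac{\beta}{1-\beta}
\]
(at $\vep'\approx|I_k|=N_k\vep_k$). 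Therefore even a \emph{single} full horseshoe with your normalization forces $\overline{\mathrm{mdim}_M}\geqslant\min\{1,\beta/(1-\beta)\}>\beta$ for every $\beta\in(0,1)$. This is not a technicality about interactions between the $I_k$'s; it is intrinsic to a full horseshoe. (Relatedly, the confinement $|I_{k+1}|<\vep_k$ that you flag as ``the main obstacle'' fails outright in your calibration: $|I_{k+1}|=2^{-(k+1)(1-\beta)/\beta}$ and $\vep_k=2^{-k/\beta}$, so $|I_{k+1}|<\vep_k$ is equivalent to $k<(1-\beta)/\beta$, which breaks for all $k$ once $\beta\geqslant 1/2$ and eventually for any $\beta>0$.)

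The paper circumvents this by \emph{not} using full horseshoes: on the $k$-th interval $J_k$ the map has $\ell_k$ affine branches of width $\vep_k=\gamma_k/\ell_k$, but only $i_k=\lfloor\vep_k^{-\beta}\rfloor$ of them return to $J_k$; the remaining branches are routed into gap intervals $G_k$ that contain attracting fixed points. This decouples the separating scale $\vep_k$ (determined by the branch width, i.e.\ by $\ell_k$) from the entropy $\log i_k<|\log\vep_k|$ of the invariant Cantor set $J_k^\infty$, and gives $h(f_\beta\mid_{J_k^\infty},\vep')\leqslant\log i_k=\beta|\log\vep_k|\leqslant\beta|\log\vep'|$ for \emph{all} $\vep'\leqslant\vep_k$, which is precisely the upper bound a full shift cannot deliver. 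The confinement is then enforced exactly (via the recursion $a_{2k}=\vep_{k-1}$) so that every level $j>k$ is trapped in $[0,a_{2k+2}]\subset[0,\vep]$. To salvage your argument you would need to replace the full $N_k$-horseshoes with such ``partial'' horseshoes; that is a genuine additional idea, not a refinement of the Misiurewicz construction.
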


\medskip

It is natural to consider the upper metric mean dimension as a function of three variables, namely the dynamics $f$, the $f$-invariant non-empty compact set $Z\subset X$ and the metric $d$, and to ask whether it varies continuously. Concerning the first variable, within the space of homeomorphisms satisfying the assumptions of Theorem~\ref{thm:main} the irregularity of the map $Z \,\mapsto\, \overline{\mathrm{mdim}_M}\,(Z, f\mid_Z,d)$, with respect to the Hausdorff metric, is a consequence of property (5) in Section~\ref{sec:definitions} together with the $C^0$-general density theorem \cite{H1996}. Indeed, $C^0$-generically the non-wandering set is the limit (in the Hausdoff metric) of finite unions of periodic points,
on which the upper metric mean dimension is zero, whereas Theorem~\ref{thm:main} ensures that generically the
upper metric mean dimension is positive. Regarding the second variable, in the case of smooth manifolds $(X,d)$ where the $C^1$-diffeomorphisms are $C^0$-dense on the space of homeomorphisms
(which is true if the dimension of the manifold $X$ is smaller or equal to $3$, cf. \cite{M1960}),
Theorem~\ref{thm:main} implies that there are no continuity points of the map $f\,\mapsto\, \overline{\mathrm{mdim}_M}\,(X, f, d)$.
As far as we know, the dependence on the third variable is still an open problem.

\section{Absorbing disks}\label{sec:overview}

In this section we address some generic topological properties of homeomorphisms acting on smooth manifolds, aiming to check the existence of absorbing disks with arbitrarily small diameter.

Following M. Hurley in \cite{H1996}, if the dimension of the manifold $X$ is $\mathrm{dim} X$ and $D^{\mathrm{dim} X}_1$ denotes the closed unit ball in $\mathbb{R}^{\mathrm{dim} X}$, call $B \subset X$ a \emph{disk} if it is homeomorphic to $D^{\mathrm{dim} X}_1$. A closed subset $K$ of $X$ is called \emph{$k$-absorbing} for a homeomorphism $f$ of $X$ if $f^k(K)$ is contained in the interior of $K$, and $K$ is said to be \emph{absorbing} if it is $k$-absorbing for some $k \in \mathbb{N}$. Note that if $B$ is a $k$-absorbing disk, then, by Brouwer fixed point theorem, $B$ contains a point periodic by $f$ with period $k$. We say that a point $P \in X$ is a \emph{periodic attracting point} for $f$ if there is a $p$-absorbing disk $B$ satisfying
\begin{itemize}\label{def:absorbing}
\item[(1)]  $\mathrm{diam}(f^{i}(B))  < \mathrm{diam}(B)$ for every $1 \leqslant i \leqslant p-1$;
\smallskip
\item[(2)]  $\bigcap_{j \geqslant 0}\,f^{jp}(B) = \{P\}$.
\end{itemize}
Observe that, since $f$ is a bijection, the last equality implies that $f^p(P)=P$. We also remark that, given a periodic attracting point, it is possible to choose the disk $B$ satisfying $f^j(B) \cap B = \emptyset$ for every $1 \leqslant j < p$. In the next sections we will always assume that absorbing disks satisfy this property.

\smallskip

Proposition 3 in \cite{H1996} ensures that for every $F \in  \mathrm{Homeo}(X,d)$ and every $\vep >0$ there is $f \in  \mathrm{Homeo}(X,d)$ exhibiting a periodic attracting point and such that $D(F,f) < \vep$.
Notice that having a periodic attracting point is a \emph{$C^0$ quasi-robust property}. More precisely, for every $g \in \mathrm{Homeo}(X,d)$ that is $C^0$ close enough to $f$ the following conditions hold:
\begin{itemize}
\item[(a)] if $B$ is a $p$-absorbing disk for $f \in \mathrm{Homeo}(X,d)$ then $B$ is $p$-absorbing for $g$;
\smallskip
\item[(b)] if $B$ is a $p$-absorbing disk for $f \in \mathrm{Homeo}(X,d)$ then for every $1 \leqslant j < p$ the disk $f^i(B)$ is $p$-absorbing for $g$;
\smallskip
\item[(c)] for every $\delta>0$ we may find some $J \geqslant 0$ such that $f^{Jp}(B)$ has diameter smaller than $\delta$ and is a $p$-absorbing disk for $g$.
\smallskip
\end{itemize}
Properties (a) and (b) are immediate consequences of the closeness in the uniform topology and the compactness of $B$. Property (c) is due to the attracting nature of the periodic point (that is, $B$ is a $p$-absorbing disk satisfying $\bigcap_{j \geqslant 0}\,f^{jp}(B) = \{P\}$) and item (a).
Unless stated otherwise, the $p$-absorbing disks we will use satisfy the aforementioned properties.

\smallskip

Altogether this shows that having a $p$-absorbing disk of diameter $\delta$ is a $C^0$-open and dense condition. Therefore, taking the intersection of the sets
$$\mathcal H_n = \Big\{ f\in  \mathrm{Homeo}(X,d) \colon f \, \text{ has an absorbing disk with diameter at most } 1/n \Big\}$$
we conclude that:

\begin{lemma}\label{le:absorbing}
$C^0$-generic homeomorphisms have absorbing disks of arbitrarily small diameter.
\end{lemma}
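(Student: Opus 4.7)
The plan is to apply the Baire category theorem to the sequence $\mathcal{H}_n$ already defined just above the lemma statement. The task splits into proving that each $\mathcal{H}_n$ is $C^0$-open and that each $\mathcal{H}_n$ is $C^0$-dense in $\mathrm{Homeo}(X,d)$; then $\bigcap_{n\geqslant 1}\mathcal{H}_n$ is a $C^0$-Baire residual subset of $\mathrm{Homeo}(X,d)$, and any $f$ in this intersection has, by the very definition of $\mathcal{H}_n$, absorbing disks of arbitrarily small diameter.

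For openness of $\mathcal{H}_n$, I would take $f \in \mathcal{H}_n$ together with a $p$-absorbing disk $B$ for $f$ of diameter at most $1/n$, and apply property (a) of the previous discussion: since $B$ is compact and $f^p(B)$ lies in the interior of $B$, any homeomorphism $g$ sufficiently $C^0$-close to $f$ will also satisfy $g^p(B) \subset \mathrm{int}(B)$, so $B$ is still a $p$-absorbing disk for $g$ (of the same diameter). Hence a whole $C^0$-neighbourhood of $f$ is contained in $\mathcal{H}_n$.

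For density of $\mathcal{H}_n$, I would fix an arbitrary $F \in \mathrm{Homeo}(X,d)$ and $\vep > 0$, and first invoke Hurley's Proposition~3 from \cite{H1996} to obtain $f \in \mathrm{Homeo}(X,d)$ with $D(F,f) < \vep$ that has a periodic attracting point $P$ together with a $p$-absorbing disk $B$ satisfying the two defining conditions of the notion (strictly decreasing diameter along the first $p-1$ iterates, and $\bigcap_{j\geqslant 0}f^{jp}(B)=\{P\}$). The second condition forces $\mathrm{diam}(f^{jp}(B)) \to 0$ as $j\to+\infty$ (otherwise one would extract a nontrivial limit set inside the intersection), so there exists $J$ with $\mathrm{diam}(f^{Jp}(B)) < 1/n$; and property (c) guarantees that $f^{Jp}(B)$ is itself a $p$-absorbing disk for $f$. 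Therefore $f \in \mathcal{H}_n$ and $\mathcal{H}_n$ meets every $C^0$-neighbourhood of $F$.

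The main potential obstacle is really only the density step, and within that the key input is external: one needs Hurley's construction of periodic attracting points, which is already cited. Once that is in hand, the rest is purely a compactness-plus-Baire exercise; and the observation that the orbit-disk radii collapse to zero (allowing one to produce a small-diameter absorbing disk inside the original one) is precisely what property (c) encodes. So no new ingredient beyond those already collected before the lemma is required.
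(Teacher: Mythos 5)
Your proposal is correct and follows essentially the same route as the paper: decompose the lemma into $C^0$-openness of each $\mathcal{H}_n$ (via compactness of the absorbing disk, i.e.\ property (a)) and $C^0$-density of each $\mathcal{H}_n$ (via Hurley's Proposition~3 together with the shrinking-diameter observation encoded in property (c)), then intersect over $n$. The only cosmetic difference is that you rederive the diameter collapse from condition $\bigcap_j f^{jp}(B)=\{P\}$ rather than quoting (c) directly, but this is precisely the justification the paper itself gives for (c).
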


\section{Pseudo-horseshoes}\label{sec:prelim}

In this section we introduce the class of invariants that will play the key role in the proof of Theorem~\ref{thm:main}. They will be defined first on Euclidean spaces and afterwards conveyed to manifolds \emph{via} charts.

\subsection{Pseudo-horseshoes on $\mathbb R^k$}

Consider in $\mathbb{R}^k$ the norm
$$\|(x_1, \cdots, x_k)\| := \max_{1\,\leqslant \,i\,\leqslant k} \,|x_i|.$$
Given $r > 0$ and $x \in \mathbb{R}^k$, set
\begin{eqnarray*}
D^k_r(x) &=& \Big\{y \in \mathbb R^k \colon \|x - y\| \leqslant r\Big\} \\
D^k_r &=& D^k_r\Big((0,\dots,0)\Big).
\end{eqnarray*}
For $1\leqslant j\leqslant k$, let $\pi_j \colon \,\mathbb{R}^k \,\to\,\mathbb{R}^j$ be the projection on the first $j$ coordinates.

\begin{definition}\label{def:vertical}
Consider $r > 0$, $x=(x_1, \cdots, x_k)$ and $y=(y_1, \cdots, y_k)$ in $\mathbb{R}^k$, and take an open set
$U \subset \mathbb R^k$ containing $D_r^k(x)$. Having fixed a positive integer $N$, we say that a homeomorphism $\varphi \colon \,U \,\to\,\mathbb R^k$ has a \emph{pseudo-horseshoe of type $N$ at scale $r$ connecting $x$ to $y$} if the following conditions are satisfied:
\begin{enumerate}
\item $\varphi(x)=y$.
\medskip
\item $\varphi\Big(D_r^k(x)\Big) \subset \mathrm{int} \Big(D_r^{k-1}(\pi_{k-1}(y))\Big) \,\times \,\mathbb R$.
\medskip
\item For $i=0,1,\ldots,\left[\frac{N}{2}\right]$,
$$\varphi\Big(D_r^{k-1}(\pi_{k-1}(x)) \,\times\, \Big\{x_k - r + \frac{4ir}{N}\Big\} \Big) \subset \mathrm{int} \Big(D_r^{k-1}(\pi_{k-1}(y))\Big)\,\times\, (-\infty, \,y_k-r).$$
\item For $i=0,1,\ldots,\left[\frac{N-1}{2}\right]$,
$$\varphi\left(D_r^{k-1}(\pi_{k-1}(x))\,\times\,\Big\{x_k - r + \frac{(4i+2)r}{N}\Big\}\right) \subset \mathrm{int} \Big(D_r^{k-1}(\pi_{k-1}(y))\Big)\,\times\, (y_k + r, \,+\infty).$$
\item
 For each $i \in \{0,\dots,N-1\}$, the intersection
$$ V_i=D_r^k(y) \,\cap \,\varphi\left(D_r^{k-1}(x) \times \left[x_k - r + \frac{2ir}{N}, \,x_k - r + \frac{(2i+2)r}{N}\right]\right)$$
is connected and satisfies:
\begin{itemize}
\item[(a)] $ V_i \,\cap\, (D_r^{k-1}(y) \times \{-r\}) \not=\emptyset$;
\medskip
\item[(b)] $ V_i \,\cap \,(D_r^{k-1}(y)\times \{r\}) \not=\emptyset;$
\medskip
\item[(c)] each connected component of $ V_i \cup \partial D_r^k(y)$ is simply connected.
\end{itemize}

\end{enumerate}
\end{definition}

The name \emph{pseudo-horseshoe} is adequate since, when $x=y$, the map $\varphi$ does admit a compact invariant subset which is semi-conjugate to a subshift of finite type (cf. \cite{KY2001}).
Each $V_i$ is called a \emph{vertical strip} of the pseudo-horseshoe $\varphi$, and we denote the collection of vertical strips of $\varphi$ by $\mathcal{V}_\varphi$.

Notice that this definition is both topological and geometrical. Indeed, while we consider homeomorphisms, we also assume that certain scale is preserved and identify a preferable vertical direction by means of coordinates.

\begin{definition}\label{def:v-strips-sep}
Consider $\vep>0$ and a homeomorphism $\varphi \colon \,U \,\to\,\mathbb R^k$ with a pseudo-horseshoe of type $N$ at scale $r$ connecting $x$ to $y$. The pseudo-horseshoe is said to be \emph{$\vep$-separating} if we may choose the collection $\mathcal{V}_\varphi$ so that the Hausdorff distance between distinct vertical strips is bigger than $\vep$, that is,
$\inf\, \{\|a-b\| \colon a \in V_i, \,\,b \in V_j\} > \vep$ for every $i \neq j$.
\end{definition}

\medskip

\subsection{Pseudo-horseshoes on manifolds}

So far, pseudo-horseshoes were defined in open sets of $\mathbb{R}^k$. Now we need to convey this notion to manifolds.

\begin{definition}\label{def:po}
Let $(X,d)$ be a compact smooth manifold of dimension $\mathrm{dim} X$. Given $f \in \mathrm{Homeo}(X,d)$ and constants $0 < \alpha < 1$, $\delta > 0$, $0 < \vep < \delta$ and $p \in \mathbb{N}$, we say that $f$ has a $(\delta,\vep,p,\alpha)$-\emph{pseudo-horseshoe} if we may find a pairwise disjoint family of open subsets $(\mathcal{U}_i)_{0\,\leqslant\, i \,\leqslant\, p-1}$ of $X$ so that
$$f(\mathcal{U}_{i}) \cap \mathcal{U}_{(i+1)\mathrm{mod} \, p} \neq \emptyset \quad \quad \forall\,\, i$$
and a collection $(\phi_i)_{0\,\leqslant\, i\, \leqslant\, p-1}$ of homeomorphisms
$$\phi_i\colon D_{\delta}^{\mathrm{dim} X} \subset \mathbb{R}^{\mathrm{dim} X} \quad \to \quad \mathcal{U}_i \subset M$$
satisfying, for every $0 \leqslant i\leqslant p-1$:
\medskip
\begin{enumerate}
\item $\left(f\circ \phi_i\right)(D_{\delta}^{\mathrm{dim} X} ) \subset \mathcal{U}_{(i+1)\mathrm{mod} \, p}$.
\medskip
\item The map
$$\psi_i = \phi_{(i+1)\mathrm{mod} \, p}^{-1}\circ f\circ \phi_i \colon \quad D_{\delta}^{\mathrm{dim} X} \to\, {\mathbb R}^{\mathrm{dim} X}$$
has a pseudo-horseshoe of type $\lfloor\Big(\frac1\vep\Big)^{\alpha \, \mathrm{dim} X}\rfloor$ at scale $\delta$ connecting $x=0$ to itself and such that:
\medskip
\begin{enumerate}
\item There are families $\{V_{i,j}\}_{j}$ and $\{H_{i,j}\}_{j}$ of vertical and horizontal strips, respectively, with $j \in \{1, 2, \dots, \lfloor\Big(\frac1\vep\Big)^{\alpha \, \mathrm{dim} X}\rfloor\}$, such that
$H_{i,j} = \psi_i^{-1} \big({V}_{i,j}\big).$
\medskip
\item For every $j_1 \neq j_2 \in \{1, 2, \dots, \lfloor\Big(\frac1\vep\Big)^{\alpha \, \mathrm{dim} X}\rfloor\}$ we have
$$\min\,\Big\{\inf\,\{\|a-b\|\colon \, a \in V_{i,j_1}, \,b \in V_{i,j_2}\}, \quad \inf\,\{\|z-w\|\colon \, z \in H_{i,j_1}, \,w \in H_{i,j_2}\}\Big\} > \vep.$$
\end{enumerate}
\end{enumerate}
\end{definition}

Regarding the parameters $(\delta, \vep, p, \alpha)$ that identify the pseudo-horseshoe, we note that $\delta$ is a small scale determined by the size of the $p$ domains and the charts so that item (1) of Definition~\ref{def:po} holds; $\vep$ is the scale at which a large number (which is inversely proportional to $\vep$ and involves $\alpha$) of finite orbits is separated to comply with the demand (2) of Definition~\ref{def:po}; and $\alpha$ is conditioned by the room in the manifold needed to build the convenient amount of $\vep$-separated points.

\begin{definition}\label{def:coherent}
We say that $f$ has a \emph{coherent $(\delta,\vep,p,\alpha)$-pseudo-horseshoe} if the pseudo-horseshoe satisfies the extra condition
\begin{enumerate}
\item[(3)] For every $0 \leqslant i \leqslant p-1$ and every $j_1 \neq j_2  \in \{1, 2, \dots, \lfloor\Big(\frac1\vep\Big)^{\alpha \, \mathrm{dim} X}\rfloor\}$, the horizontal strip $H_{i,j_1}$ crosses the vertical strip $V_{(i+1)\mathrm{mod} \, p, j_2}$.
\end{enumerate}
By crossing we mean that there exists a foliation of each horizontal strip $H_{i,j} \subset D_{\delta}^{\mathrm{dim} X}$
by a family $\mathcal C_{i,j}$ of continuous curves $c\colon [0,1] \to H_{i,j}$ such that $\psi_i(c(0))\in D_{\delta}^{k-1}\times \{-\delta\}$ and $\psi_i(c(1))\in D_{\delta}^{k-1}\times \{\delta\}$.
\end{definition}

\begin{figure}[h]\label{fig4}
\includegraphics[scale=.4]{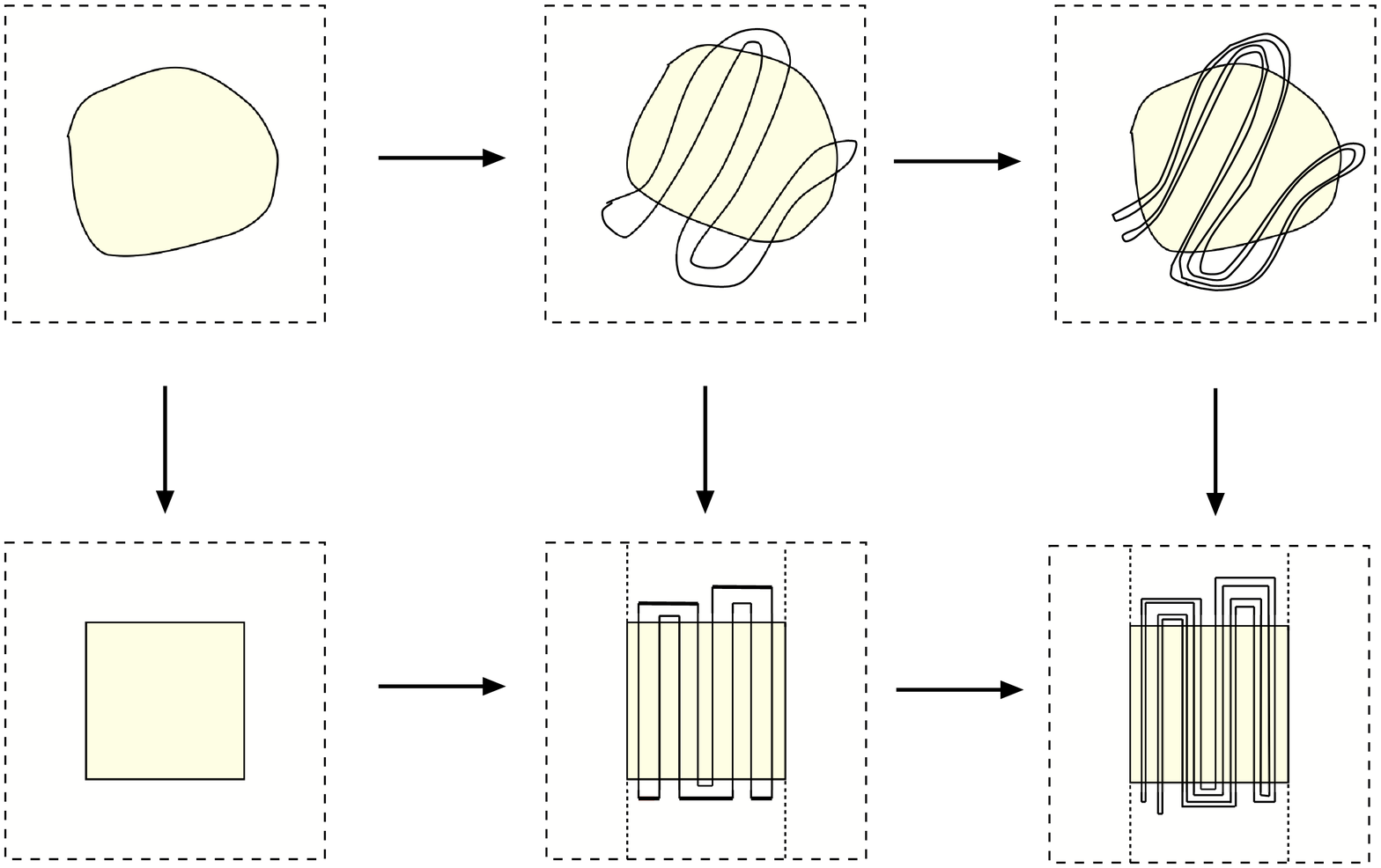}
\hspace{.5cm}\includegraphics[scale=.4]{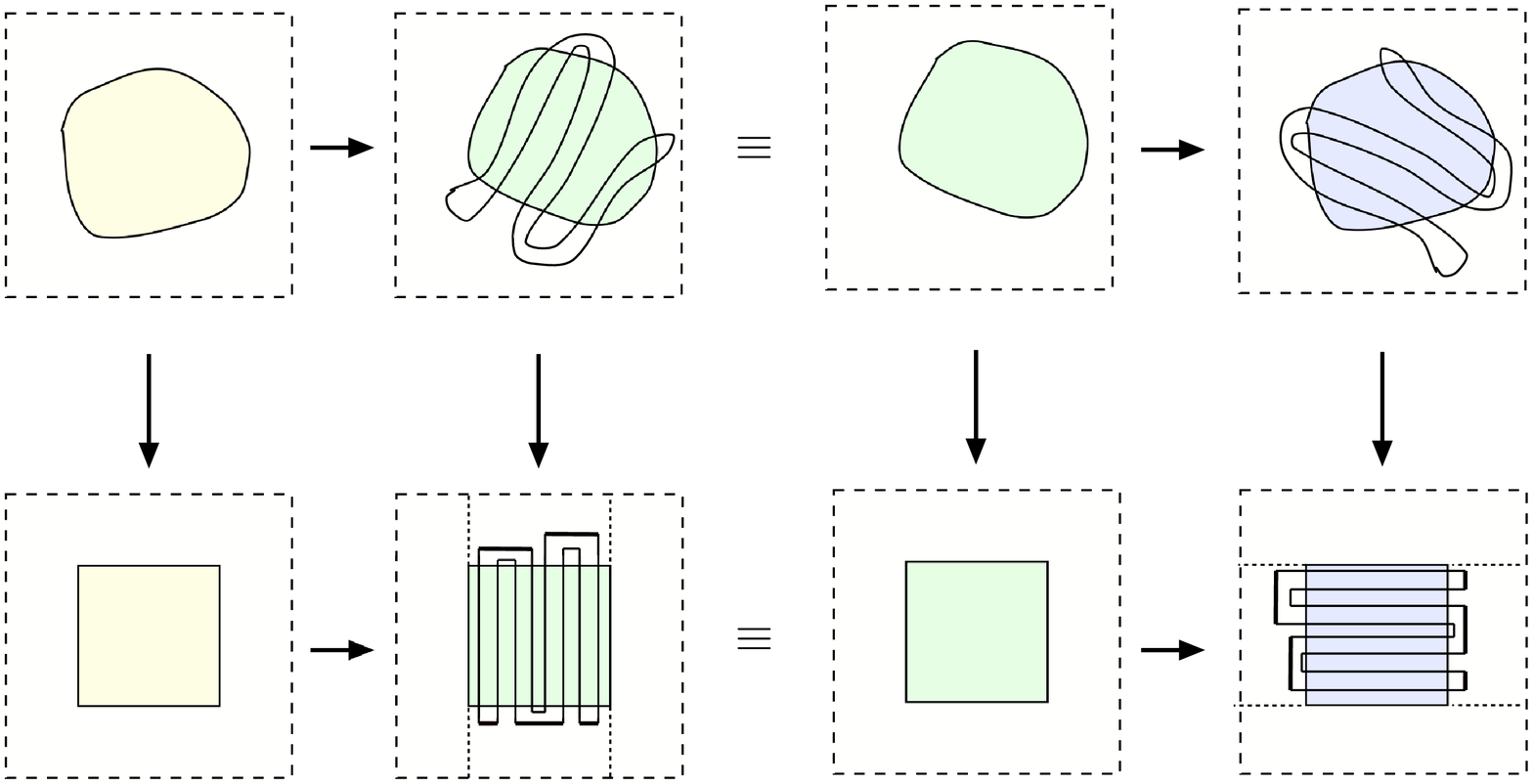}
\caption{Illustration of a coherent (top) and a non-coherent (bottom) pseudo-horseshoe.}
\end{figure}

There are two important main features of coherent $(\delta,\vep,p,\alpha)$-pseudo-horseshoes. Firstly, $(\delta,\vep,p,\alpha)$-pseudo-horseshoes associated to a homeomorphism $f$
persist by $C^0$ perturbations of $f$. Secondly, if the $(\delta,\vep,p,\alpha)$-pseudo-horseshoe is coherent and one considers the composition $\psi_{p-1} \circ \dots \circ \psi_0$ on the suitable subdomain of $D_{\delta}^{\mathrm{dim} X}$,
containing $\lfloor\Big(\frac1\vep\Big)^{\alpha\, \mathrm{dim} X}\rfloor^p$ horizontal strips which are mapped onto vertical strips and are eventually $\vep$-separated by $f$ up to the $p$th iterate. In particular, any homeomorphism $f$ which has a coherent $(\delta, \vep, p, \alpha)$-pseudo-horseshoe also has a $(p,\vep)$-separated set with $\lfloor\Big(\frac1\vep\Big)^{\alpha\, \mathrm{dim} X}\rfloor^p$ elements (see Figure~\ref{fig4}). It is precisely this type of characterization of the local behavior of vertical and horizontal strips in a neighborhood of a $p$-periodic point we will further select that compels the main differences between our argument and the ones used in \cite{VV,Yano}.

\begin{remark}\label{rmk:crossing}
\emph{While vertical and horizontal strips in $\mathbb R^k$ can be defined in terms of Euclidean coordinates, the same notions on the manifold $X$ are local and depend both on the dynamics of $f$ and the smooth charts $(\phi_i)_{0 \leqslant i\leqslant p-1}$. On the manifold,
 the Intermediate Value Theorem ensures that $\widehat H_{i,j} := \phi_i(H_{i,j}) \subset \mathcal U_i$ crosses every vertical strip $\widehat V_{i,j} := f(\widehat H_{i,j})$ as well.}
\end{remark}

\begin{remark}\label{rmk:Lipschitz}
\emph{To estimate the metric mean dimension using local charts taking values in Euclidean coordinates, the separation scale in Euclidean coordinates (as in Definition~\ref{def:po}) has to be preserved by charts. For this reason, we assume that the local charts $(\phi_i)_{0\leqslant i \leqslant p-1}$ are bi-Lipschitz, and thereby we require the compact manifold to be smooth.}
\end{remark}

\section{Separating sets}

We start linking the existence of pseudo-horseshoes to the presence of big separating sets.

\begin{proposition}\label{prop:separated}
Assume that $X$ is a smooth compact manifold. If $f \in \mathrm{Homeo}(X,d)$ then there exists $C>1$ such that, if $f$ has a coherent $(\delta,\vep,p,\alpha)$-pseudo-horseshoe, then
\begin{equation} \label{eq:Main}
s\left(f, p\, \ell, C^{-1}\varepsilon\right) \geqslant \Big(\lfloor \Big(\frac1\vep\Big)^{\alpha\, \, \mathrm{dim} X}\rfloor\Big)^{p\,\ell} \quad \quad \forall \, \ell \in \mathbb{N}.
\end{equation}
\end{proposition}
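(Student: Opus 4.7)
The plan is to exploit the coherent pseudo-horseshoe structure to associate a distinct orbit to each word in $\{1,\dots,N\}^{p\ell}$, where $N := \lfloor(1/\vep)^{\alpha\,\dim X}\rfloor$, and then to use the bi-Lipschitz nature of the charts (Remark~\ref{rmk:Lipschitz}) to push the Euclidean $\vep$-separation down to the manifold with a uniform loss factor.

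First I would fix a constant $L\geqslant 1$ that is a common bi-Lipschitz constant for the finitely many charts $\phi_0,\dots,\phi_{p-1}$ and their inverses, and set $C:=L$. This constant depends on $f$ (through the choice of charts) but not on $\delta,\vep,p,\alpha,\ell$, which is consistent with what the statement allows. Next, given a sequence $s=(j_0,\dots,j_{p\ell-1})\in\{1,\dots,N\}^{p\ell}$, I would build a point $x_s\in\mathcal U_0$ realising $s$ as its symbolic itinerary, in the sense that
\[
f^t(x_s)\in\phi_{t\bmod p}\bigl(H_{t\bmod p,\,j_t}\bigr)\qquad\text{for every }0\leqslant t\leqslant p\ell-1.
\]
To do this I would argue by induction on $t$, producing compact, nonempty nested sets
\[
R_t(s):=\bigcap_{r=0}^{t} f^{-r}\Bigl(\phi_{r\bmod p}\bigl(H_{r\bmod p,\,j_r}\bigr)\Bigr).
\]
The crux is that the coherence condition (3) in Definition~\ref{def:coherent} ensures that $f\bigl(\phi_{r\bmod p}(H_{r\bmod p,j_r})\bigr)=\phi_{(r+1)\bmod p}(V_{(r+1)\bmod p,j_r})$ genuinely crosses every horizontal strip $H_{(r+1)\bmod p,j_{r+1}}$ via the foliation $\mathcal C_{\cdot,\cdot}$ of vertical curves, so a sub-horizontal-strip of $H_{r\bmod p,j_r}$ lies in $f^{-1}\bigl(\phi_{(r+1)\bmod p}(H_{(r+1)\bmod p,j_{r+1}})\bigr)$, keeping $R_{r+1}(s)$ nonempty and compact. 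Picking any $x_s\in R_{p\ell-1}(s)$ finishes the construction, and associates an injection $s\mapsto x_s$ from $\{1,\dots,N\}^{p\ell}$ into $X$.

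To verify that the family $\{x_s\}_s$ is $(p\ell,C^{-1}\vep)$-separated, I would take $s\neq s'$, let $t_0$ be the first index with $j_{t_0}\neq j'_{t_0}$, and look at $f^{t_0}(x_s)$ and $f^{t_0}(x_{s'})$: they sit in $\phi_{t_0\bmod p}(H_{t_0\bmod p,j_{t_0}})$ and $\phi_{t_0\bmod p}(H_{t_0\bmod p,j'_{t_0}})$, respectively, which are images under a bi-Lipschitz map of two horizontal strips whose Hausdorff distance in $D_\delta^{\dim X}$ exceeds $\vep$ by condition~(2)(b) of Definition~\ref{def:po}. Hence
\[
d(f^{t_0}(x_s),f^{t_0}(x_{s'}))\geqslant L^{-1}\vep=C^{-1}\vep,
\]
so $d_{p\ell}(x_s,x_{s'})>C^{-1}\vep$, yielding at least $N^{p\ell}$ many $(p\ell,C^{-1}\vep)$-separated points.

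I expect the main obstacle to be the careful justification that the nested intersections $R_t(s)$ are nonempty: one needs the coherence condition not only to match a single horizontal strip with a single vertical strip but to ensure that, under iteration, the family of crossings propagates and that the transversality between vertical strips coming from $f\circ\phi_{r\bmod p}$ and horizontal strips in the next chart is preserved through the composition $\phi_{(r+1)\bmod p}^{-1}\circ f\circ\phi_{r\bmod p}=\psi_{r\bmod p}$. The topological notion of \emph{crossing} (through the foliations $\mathcal C_{i,j}$) together with the Intermediate Value Theorem, as indicated in Remark~\ref{rmk:crossing}, is what forces each $R_t(s)$ to contain a sub-horizontal-strip of $\phi_0(H_{0,j_0})$, hence to be nonempty. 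The rest is bookkeeping.
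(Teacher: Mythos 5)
Your argument is essentially the same as the paper's: both construct nested pullbacks of strips indexed by words in $\{1,\dots,N\}^{p\ell}$, using coherence/crossing to keep the intersections nonempty, and both push the Euclidean $\vep$-separation through the bi-Lipschitz charts to get $C^{-1}\vep$-separation on the manifold. Your separation argument is in fact slightly cleaner than the paper's, since you look at the first index $t_0$ where the itineraries differ and invoke the horizontal-strip separation from condition (2)(b) there directly, whereas the paper splits into cases ($j_1\neq J_1$ via vertical strips, then $j_1=J_1,\,j_2\neq J_2$) and then iterates the two-step claim $\ell$ times with $f^p$; the conclusion and the mechanism are the same. One small slip: you write $f(\phi_{r\bmod p}(H_{r\bmod p,j_r}))=\phi_{(r+1)\bmod p}(V_{(r+1)\bmod p,j_r})$, but with the paper's indexing it should read $\phi_{(r+1)\bmod p}(V_{r\bmod p,j_r})$, since $V_{i,j}=\psi_i(H_{i,j})$ carries the same first index $i$ even though it lives in the chart $\phi_{(i+1)\bmod p}$; this does not affect the validity of the argument.
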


\begin{proof}
Let $N=\lfloor\Big(\frac1\vep\Big)^{\alpha\, \mathrm{dim} X}\rfloor$. By assumption, there are charts $(\phi_i)_{0\leqslant i \leqslant p-1}$ such that each of the maps $\psi_i=\,\phi_{(i+1)\mathrm{mod}\,p}^{-1}\circ f\circ \phi_i$ has an $\vep$-separating pseudo-horseshoe of type $N$ at scale $\delta$.
 Moreover, the horizontal strips $(H_{i,j})_{j=1,\,\cdots, \,N}$ in the domain $D_{\delta}^{\mathrm{dim} X}$ of $\psi_i$ are $\vep$-separated
and the same holds for the vertical strips $(V_{i,j})_{j=1,\,\cdots, \,N}$ in the image of $\psi_i$.

Define the horizontal and vertical strips, respectively, on the manifold $X$ by
$$\widehat H_{i,j} := \phi_i(H_{i,j}) \quad \quad \text{and} \quad \quad \widehat V_{i,j} := f(\widehat H_{i,j})=(f\circ \phi_i)(\widehat H_{i,j})$$
for $0\leqslant i \leqslant p-1$ and $1\leqslant j \leqslant N$. Observe that, by construction,
$$\phi_{(i+1)\mathrm{mod}\,p}^{-1}\Big(\widehat  V_{i,j}\Big)= \Big(\phi_{(i+1)\mathrm{mod}\,p}^{-1}\circ f\circ \phi_i\Big) (H_{i,j}) = \psi_i (H_{i,j})=V_{i,j}$$
is a vertical strip in the domain $D_{\delta}^{\mathrm{dim} X}$ of the pseudo-horseshoe $\psi_i$.
Consider also the following non-empty compact subsets of $X$:
\begin{eqnarray*}
j \,\in\,\{1, \cdots, N\} \quad &\mapsto& \quad \widehat{K}_{0,j} := \widehat H_{0,j}
\\
j_1,j_2 \,\in \,\{1, \cdots, N\} \quad &\mapsto& \quad \widehat{K}_{1,j_1,j_2} :=  f^{-1}(\widehat V_{0,j_1} \cap \widehat H_{1,j_2})
	= f^{-1}(f(\widehat K_{0,j_1})\cap \widehat H_{1,j_2})
\\
\vdots  \quad \quad &&  \quad \quad \vdots
\\
j_1,j_2, \cdots,j_p \,\in \,\{1, \cdots, N\} \quad &\mapsto& \quad
	\widehat {K}_{p-1,j_1,j_2, \dots, j_p} :=  f^{-{(p-1)}}\Big(f^{p-1} (\widehat { K}_{p-2,j_1,j_2, \dots, j_{p-1}})
		\cap \widehat H_{p-1,j_p}\Big).
\end{eqnarray*}
Taking into account that $X$ is a smooth manifold, we may assume that all the maps $\{\phi_i^{\pm 1} \colon 0 \leqslant i \leqslant p-1\}$ are Lipschitz with Lipschitz constant bounded by a uniform constant $C>1$. In particular, by item 2(b) in Definition~\ref{def:po}, there exist at least $N$ points which are $(C^{-1}\vep)$-separated by $f$ in $\widehat{K}_{0,j}$. \\

\noindent \textbf{Claim}: \emph{With the previous notation,
\begin{eqnarray*}
(j_1, \,j_2) &\neq& (J_1,\, J_2)\\
x &\in& \widehat  {K}_{1,j_1,j_2} \quad \quad \Rightarrow \quad \quad \text{$x$ and $y$ are $(2,C^{-1}\vep)$-separated}.\\
y &\in& \widehat {K}_{1,J_1,J_2}
\end{eqnarray*}}

\smallskip

\noindent Indeed, as $\phi_2^{-1}$ is $C$-Lipschitz and $j_1\neq J_1$, then
$$d_2(x,y) \geqslant d(f(x), f(y)) \geqslant \mathrm{dist}(\widehat V_{1,j_1}, \widehat V_{1,J_1}) \geqslant C^{-1}\mathrm{dist}( V_{1,j_1},  V_{1,J_1}) > C^{-1}\vep$$
where
$$\mathrm{dist}(A,B) := \left\{
\begin{array}{ll}
\inf\,\{\|a-b\|\colon \, a \in A, \,b \in B\}, & \text{if $A, B \subset \mathbb{R}^k$}\\
\medskip \\
\inf\,\{d(a, b)\colon \, a \in A, \, b \in B\}, & \text{if $A, B \subset X$}.
\end{array}
\right.$$
On the other hand, if $j_1= J_1$ and $j_2 \neq J_2$, then $f(x), f(y) \in \widehat V_{1,j_1}$ but lie in different horizontal strips; consequently, $f^2(x) \in \widehat V_{1,j_2}$ and $f^2(y) \in \widehat V_{1,J_2}$ and so
$$d_2(x,y) \geqslant  d(f^2(x), f^2(y)) \geqslant C^{-1} \mathrm{dist}(\widehat V_{1,j_2}, \widehat V_{1,J_2}) > C^{-1}\vep.$$

Recall that we have associated to $(j_1, j_2, \dots, j_p) \in \{1, 2, \dots, N\}^{p}$ the non-empty compact set
$$\widehat {K}_{p-1,j_1,j_2, \dots, j_p} =  f^{-{(p-1)}}\Big(f^{p-1} (\widehat {K}_{p-2,j_1,j_2, \dots, j_{p-1}}) \cap \widehat H_{p-1,j_p}\Big)$$
and observe that, whenever $(j_1,j_2, \dots, j_p) \neq (J_1, J_2, \dots, J_p)$, one has
$$d_p(x, y) > C^{-1} \vep \quad \quad \forall\,\, x \,\in\, \widehat {K}_{p-1,j_1,j_2, \dots, j_p} \quad \forall \,\, y\, \in\, \widehat {K}_{p-1, J_1, J_2, \dots, J_p}.$$
This proves that
$$s\left(f,p\ ,C^{-1}\varepsilon\right) \geqslant N^{p}.$$

To show \eqref{eq:Main} for $\ell \in \mathbb{N}\setminus\{1\}$, we repeat $\ell$ times the previous recursive argument for the iterate $f^p$ and the sets $\widehat {K}_{p-1, j_1, j_2, \dots, j_p}$ instead of $f$ and the sets $\widehat K_{0,j}$.
\end{proof}

\begin{corollary}\label{cor:pseudo2mmd} Under the assumptions of Proposition~\ref{prop:separated} one has
\begin{equation}\label{eq:Main-conseq2}
\limsup_{n\,\to\,+\infty} \,\frac1n \,\log \,s\left(f, n ,C^{-1}\varepsilon\right) \geqslant \alpha \, \mathrm{dim} X |\log \vep|.
\end{equation}
\end{corollary}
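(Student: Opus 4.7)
The plan is to feed the inequality of Proposition~\ref{prop:separated} into the definition of $h(f,\vep)$ by restricting the limsup to the subsequence $n = p\ell$ of multiples of the period $p$, and then bound the logarithm of the floor from below.

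First I would set $N \defeq \lfloor (1/\vep)^{\alpha\,\mathrm{dim} X}\rfloor$, so that Proposition~\ref{prop:separated} yields
\begin{equation*}
s\bigl(f, p\ell, C^{-1}\vep\bigr) \geqslant N^{p\ell} \qquad \forall\,\ell \in \mathbb{N}.
\end{equation*}
Taking logarithms, dividing by $p\ell$, and restricting the limsup along the cofinal sequence $n = p\ell$ with $\ell \to +\infty$ (which is clearly dominated by the full limsup over $n$), I obtain
\begin{equation*}
\limsup_{n\,\to\,+\infty}\,\frac{1}{n}\,\log s\bigl(f, n, C^{-1}\vep\bigr) \;\geqslant\; \limsup_{\ell\,\to\,+\infty}\,\frac{1}{p\ell}\,\log N^{p\ell} \;=\; \log N.
\end{equation*}

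Then I would relate $\log N$ to $\alpha\,\mathrm{dim} X\,|\log \vep|$. Since the pseudo-horseshoe requires the number of vertical strips to be at least two for the construction to be non-trivial, we may assume $\vep$ is small enough that $(1/\vep)^{\alpha\,\mathrm{dim} X} \geqslant 2$; this guarantees $N \geqslant \tfrac{1}{2}(1/\vep)^{\alpha\,\mathrm{dim} X}$, so that $\log N \geqslant \alpha\,\mathrm{dim} X\,|\log \vep| - \log 2$. Because the corollary is only meaningful in the asymptotic regime $\vep \to 0$ (which is what feeds the definition of the upper metric mean dimension), the additive constant $\log 2$ is absorbed and one concludes
\begin{equation*}
\limsup_{n\,\to\,+\infty}\,\frac{1}{n}\,\log s\bigl(f, n, C^{-1}\vep\bigr) \;\geqslant\; \alpha\,\mathrm{dim} X\,|\log \vep|,
\end{equation*}
as desired.

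I expect no significant obstacle: the proof is a direct bookkeeping exercise once Proposition~\ref{prop:separated} is in hand. The only point requiring care is the floor function, which can be handled either by absorbing the $\log 2$ correction into the asymptotic $\vep \to 0$ limit (which is ultimately how this corollary will be used to estimate $\overline{\mathrm{mdim}_M}$) or by noting that the same argument runs with $\alpha' < \alpha$ arbitrarily close to $\alpha$ once $\vep$ is small enough.
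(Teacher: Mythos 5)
Your proof is correct and is the natural argument the paper leaves implicit: apply Proposition~\ref{prop:separated} along the subsequence $n=p\ell$ to get $\limsup_n \frac1n\log s(f,n,C^{-1}\vep)\geqslant \log N$ with $N=\lfloor(1/\vep)^{\alpha\,\mathrm{dim}X}\rfloor$, and then compare $\log N$ with $\alpha\,\mathrm{dim}X\,|\log\vep|$. You rightly flag the only real subtlety --- the floor introduces an additive slack of at most $\log 2$, so as a literal statement at fixed $\vep$ the displayed bound only holds up to that slack; this is harmless because the corollary is used (in Lemma~\ref{le:residual}) only after dividing by $|\log\vep_k|$ and letting $\vep_k\to 0$, where the slack vanishes, or equivalently by your alternative of passing to $\alpha'<\alpha$.
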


\section{A $C^0$-perturbation lemma along orbits}\label{sec:perturbative}

We are interested in constructing coherent pseudo-horseshoes inside absorbing disks with small diameter.
The argument depends on a finite number of $C^0$-perturbations of the initial dynamics on disjoint supports. Furthermore, the pseudo-horseshoes will be obtained inside a small neighborhood of an orbit associated to a suitable concatenation of homeomorphisms $C^0$-close to the initial dynamics.

Taking into account that  $X$ is a smooth compact boundaryless manifold, we may fix a finite atlas $\mathfrak{a}$
whose charts are bi-Lipschitz. If $r_{\mathfrak{a}}>0$ denotes the Lebesgue covering number of the domains of the charts,
up to a homothety we may assume that the image of every disk of radius $r_{\mathfrak{a}}$ in $X$ contains
a disk $D_1^{\mathrm{dim} X}(v) \subset \mathbb R^{\mathrm{dim} X}$ for some $v\in \mathbb R^{\mathrm{dim} X}$.
Let $L>0$ be an upper bound of the bi-Lipschitz constants of all the charts.

\begin{proposition}\label{prop:exist-phs}
Given $\delta_0 >0$ and $f \in \mathcal{H}$, there exist $p \in \mathbb{N}$ and $0 < \delta < \delta_0$ such that, for every $0 < \vep \ll \delta$ and every $\alpha \in (0,1)$, we may find $g \in \mathrm{Homeo}(X,d)$ satisfying:
\begin{itemize}
\item[(a)] $g$ has a coherent $(\delta, L\varepsilon,p,\alpha)$-pseudo-horseshoe;
\medskip
\item[(b)] $D(g,f) \leqslant 2\delta_0$.
\end{itemize}
\end{proposition}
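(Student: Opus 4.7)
The plan is to exploit the existence of a periodic attracting point $P$ of some period $p$ whose $p$-absorbing disk $B$ has arbitrarily small diameter (guaranteed by $f\in\mathcal{H}$ via Lemma~\ref{le:absorbing}), and to perform $C^0$-small perturbations of $f$ supported on the disjoint orbit pieces $B, f(B), \ldots, f^{p-1}(B)$ so as to engineer the coherent pseudo-horseshoe inside a chain of bi-Lipschitz Euclidean charts. First, using property (c) of Section~\ref{sec:overview}, I would fix a $p$-absorbing disk $B$ small enough that each $f^i(B)$ lies inside the domain of a single chart of the atlas $\mathfrak{a}$. Let $\mathcal{U}_i$ be a slight (but pairwise disjoint) enlargement of $f^i(B)$ chosen so that $f(\mathcal{U}_i)\subset \mathcal{U}_{(i+1)\mathrm{mod}\, p}$, and let $\phi_i\colon D_{\delta}^{\mathrm{dim} X}\to \mathcal{U}_i$ be a bi-Lipschitz chart with Lipschitz constant at most $L$ and $\delta\in(0,\delta_0)$ slightly larger than $\max_i \mathrm{diam}(\mathcal{U}_i)$.

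Given $\vep\ll\delta$ and $\alpha\in(0,1)$, set $N=\lfloor(1/L\vep)^{\alpha\,\mathrm{dim} X}\rfloor$. In each chart I would construct an explicit homeomorphism $\psi_i\colon D_{\delta}^{\mathrm{dim} X}\to \mathbb R^{\mathrm{dim} X}$ realising a pseudo-horseshoe of type $N$ at scale $\delta$: partition $D_{\delta}^{\mathrm{dim} X}$ along the last coordinate into $N$ horizontal slabs of thickness $2\delta/N$ and define $\psi_i$ as a stretch-and-fold map sending the $j$-th slab to a vertical strip $V_{i,j}\subset D_{\delta}^{\mathrm{dim} X}$, with the $N$ strips arranged on a $(\mathrm{dim} X - 1)$-dimensional grid whose mutual gap is strictly larger than $L\vep$. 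I would perform the perturbation only on a slightly smaller concentric disk and force $\psi_i$ to coincide with $\phi_{(i+1)\mathrm{mod}\, p}^{-1}\circ f\circ \phi_i$ on an annular collar via an Alexander-type isotopy. Setting $g|_{\mathcal{U}_i}=\phi_{(i+1)\mathrm{mod}\, p}\circ \psi_i\circ \phi_i^{-1}$ on each orbit piece and $g=f$ elsewhere then yields a genuine homeomorphism of $X$, and since the support of the perturbation lies in $\bigcup_i \mathcal{U}_i$ with $\mathrm{diam}(\mathcal{U}_i)<\delta_0$, we obtain $D(g,f)\leq 2\delta_0$. Coherence is enforced by aligning the $(\mathrm{dim} X - 1)$-dimensional grids of vertical strips $V_{i,j_1}$ of $\psi_i$ with the grids of horizontal strips $H_{(i+1)\mathrm{mod}\, p,\,j_2}$ of $\psi_{(i+1)\mathrm{mod}\, p}$ consistently across chart transitions, so that the crossing condition of Definition~\ref{def:coherent} is satisfied.

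The main obstacle is the simultaneous satisfaction of three competing demands along the periodic orbit: packing $N$ horizontal and $N$ vertical strips with $L\vep$-separation inside a disk of fixed scale $\delta$; enforcing the geometric crossing condition between the two strip families across all $p$ chart transitions; and containing the full perturbation within the $C^0$-budget $2\delta_0$. Designing the stretch-and-fold rule so that the strip grids line up coherently throughout the whole periodic orbit --- reminiscent of the global moves of Pugh's $C^1$ Closing Lemma alluded to in the introduction --- is the delicate geometric surgery that the construction must perform.
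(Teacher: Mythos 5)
Your proposal follows the same broad strategy as the paper — obtain a small $p$-absorbing disk from Lemma~\ref{le:absorbing}, fix bi-Lipschitz charts $\phi_i$ along the orbit pieces $B, f(B), \dots, f^{p-1}(B)$, and insert a pseudo-horseshoe while keeping the $C^0$ budget under control — but the execution differs. The paper builds the perturbation as a chain of \emph{post-compositions}: for each $i$ it takes, from Yano's Proposition~1, a homeomorphism $\rho_i$ isotopic to the identity, sets $h_i = \phi_i \circ \rho_i \circ \phi_i^{-1}$ with support in $f_{i-1}(\phi_{i-1}(D^{\mathrm{dim}X}_{3\delta}))$, and passes from $f_{i-1}$ to $f_i = h_i \circ f_{i-1}$; in charts the new transition map is $\rho_i\circ\phi_i^{-1}\circ f_{i-1}\circ \phi_{i-1}$, so the boundary behavior automatically matches that of the previous stage. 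A final perturbation inside $B$ closes the orbit and accounts for the second $\delta_0$. You instead \emph{replace} $f$ on each $\mathcal U_i$ by a freshly designed stretch-and-fold map $\phi_{(i+1)\mathrm{mod}\,p}\circ\psi_i\circ\phi_i^{-1}$, tapered to $f$ by an Alexander-type collar, and $g=f$ elsewhere. A nice feature of your version is that it keeps $P, f(P), \dots, f^{p-1}(P)$ as the periodic orbit, avoiding the closing step.

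There are, however, two genuine gaps. First, the collar gluing is asserted, not established. An Alexander-type interpolation between $\psi_i$ on an inner concentric disk and $\phi_{(i+1)\mathrm{mod}\,p}^{-1}\circ f\circ\phi_i$ on the boundary annulus can, a priori, drag the vertical strips and destroy the $L\varepsilon$-separation and the crossing with the horizontal strips of $\mathcal U_{(i+1)\mathrm{mod}\,p}$, which is precisely the \emph{coherence} condition of Definition~\ref{def:coherent}. You need a quantitative control on the collar width and the displacement of the interpolation, or to rephrase the perturbation in post-composition form as the paper does, where this problem does not arise because the image-side isotopy leaves horizontal slabs in the domain untouched. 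Second, Definition~\ref{def:po}(2)(b) requires \emph{both} the vertical strips $V_{i,j}$ and the horizontal strips $H_{i,j}$ to be more than $L\varepsilon$-separated; your grid arrangement only addresses the vertical family. The horizontal strips $H_{i,j}=\psi_i^{-1}(V_{i,j})$ sit inside slabs of thickness $2\delta/N$ that share boundary faces, so they must be thinned within their slabs (exactly what conditions (3)--(4) of Definition~\ref{def:vertical} enforce, by pushing the shared faces outside $D_\delta^{\mathrm{dim}X}$) before the $L\varepsilon$-separation can hold; this step is missing from your sketch. One small credit: your $N=\lfloor(1/(L\varepsilon))^{\alpha\,\mathrm{dim}X}\rfloor$ is the correct type for a $(\delta, L\varepsilon, p, \alpha)$-pseudo-horseshoe, whereas the paper's proof works with $N=\lfloor(1/\varepsilon)^{\alpha\,\mathrm{dim}X}\rfloor$ and glosses over the factor $L$.
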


\begin{proof}

We recall from Section~\ref{sec:overview} that $C^0$ generic homeomorphisms, belonging to the residual set $\mathcal H$ given by Lemma~\ref{le:absorbing}, have absorbing disks of arbitrarily small diameter which do not disappear under small $C^0$ perturbations. More precisely, given $\delta_0 >0$, each $f\in \mathcal H$ has both a $p$-absorbing disk $B$ with diameter smaller than $\delta_0$, for some $p \in \mathbb{N}$, and an open neighborhood $\cW_f$ in $\mathrm{Homeo}(X,d)$ such that for every $g \in \cW_f$ the disk $B$ is still $p$-absorbing for $g$. In what follows we will always assume that $\cW_f$ is inside the open ball in $(\mathrm{Homeo}(X,d), D)$ centered at $f$ with diameter $\delta_0$.

\smallskip

We start fixing coordinate systems. By Brouwer's fixed point theorem, $f$ has a periodic point $P$ of period $p$ in $B$. For every $0 \leqslant i \leqslant p-1$, let $\phi_i$ be a bi-Lipschitz chart from $D_1^{\mathrm{dim} X} \subset \mathbb{R}^{\mathrm{dim} X}$ onto some open neighborhood of $f^i(P)$ contained in the disk $f^i(B)$ and such that $\phi_i((0,\cdots,0))=f^i(P)$.
These charts are obtained by the composition of restrictions of the charts of the atlas ${\mathfrak{a}}$ and possible translations,
which do not affect the value of $L$.

\smallskip

The next step is to choose $\delta > 0 $ such that every $C^0$-perturbation $h \in \mathrm{Homeo}(X,d)$ of the identity whose support has diameter smaller than $3L\delta$ satisfies $h\circ f \in \cW_f$, and so $D(h\circ f,f) \leqslant \delta_0$. The existence of such a $\delta$ is guaranteed by the uniform continuity of $f^{-1}$, since
$$D(h\circ f, f) = \max_{x \, \in \,X} \,\left\{D(h(f(x)), f(x)),\, D(f^{-1} (h^{-1}(x)), f^{-1}(x))\right\}.$$
We may assume, reducing $\delta$ if necessary, that the ball $B_{3L\delta}(f^i(P))$ is strictly contained in $f^i(B)$ for every  $0 \leqslant i \leqslant p-1$. In fact, we may say more: the closeness in the uniform topology assures that the ball $B_{3L\delta}(g_i\circ \dots \circ g_1(P))$ is contained in $f^i(B)$ for every $g_i$ which is $C^0$-close enough to $f$ and all $0 \leqslant i \leqslant p-1$.

\medskip

\noindent \textbf{Step 1:} Let $N=\lfloor\Big(\frac1\vep\Big)^{\alpha\,\mathrm{dim} X}\rfloor$.
Reducing $\delta$ if necessary, we may assume that the map
$$\phi_1^{-1}\circ f\circ \phi_0 : D_{3\delta}^{\mathrm{dim} X} \quad \to \quad D_{1}^{\mathrm{dim} X}$$
is well defined, fixes the origin and is a homeomorphism onto its image.
A reasoning similar to the proof of \cite[Proposition 1]{Yano} provides a homeomorphism
$\rho_1 \colon D_1^{\mathrm{dim} X}  \to  D_{2\delta}^{\mathrm{dim} X}$
isotopic to the identity
and such that:

\begin{enumerate}
\item $\Big(\rho_1 \circ \phi_1^{-1} \circ f\circ \phi_0\Big)\,(D^{\mathrm{dim} X}_{\delta}) \,\subset\, \mathrm{int}(D_\delta^{\mathrm{dim} X-1}) \times (-2\delta,\,2\delta)$.
\medskip
\item For $i=0,1,\dots,\left[\frac{N}{2}\right]$
$$\Big(\rho_1 \circ \phi_1^{-1}\circ f\circ \phi_0\Big)\,\Big(0,\dots,0,(-1+\frac{4i}{N})\delta\Big) \,\in\, \mathrm{int}(D_\delta^{\mathrm{dim} X-1}) \,\times\, (-2\delta,\,-\delta).$$
\medskip
\item For $i=0,1,\dots,\left[\frac{N-1}{2}\right]$
$$\Big(\rho_1 \circ \phi_1^{-1}\circ f\circ \phi_0\Big) \,\Big(0,\dots,0,(-1+\frac{4i+2}{N})\delta\Big)\,\in\,\mathrm{int}(D_\delta^{\mathrm{dim} X-1}) \,\times\, (\delta,\,2\delta).$$
\end{enumerate}

\bigskip

By continuity of $\rho_1$, if $r>0$ is small enough 
then the conditions (1)-(3) above imply that: \\

\begin{enumerate}
\item[(1')] $\Big(\rho_1 \circ \phi_1^{-1}\circ f\circ \phi_0\Big)\,(D^{\mathrm{dim} X-1}_r\,\times\, D_\delta^1)\,\subset\, \mathrm{int}(D_\delta^{\mathrm{dim} X-1}) \times (-2\delta,\,2\delta)$.
\bigskip
\item[(2')] For $i=0,1,\dots,\left[\frac{N}{2}\right]$
$$\Big(\rho_1\circ \phi_1^{-1}\circ f\circ \phi_0\Big)\,(D^{\mathrm{dim} X-1}_r \times \big\{(-1+\frac{4i}{N})\delta\big\})\,\subset\, \mathrm{int}(D_\delta^{\mathrm{dim} X-1}) \,\times\, (-2\delta,\,-\delta).$$
\medskip
\item[(3')] For $i=0,1,\dots,\left[\frac{N-1}{2}\right]$
$$\Big(\rho_1\circ \phi_1^{-1}\circ f\circ \phi_0\Big)\,(D^{\mathrm{dim} X-1}_r \times \big\{(-1+\frac{4i+2}{N})\delta\big\})\, \subset\, \mathrm{int}(D_\delta^{\mathrm{dim} X-1}) \,\times\, (\delta,\,2\delta).$$
\end{enumerate}

\begin{figure}[h]\label{fig3}
\includegraphics[scale=.4]{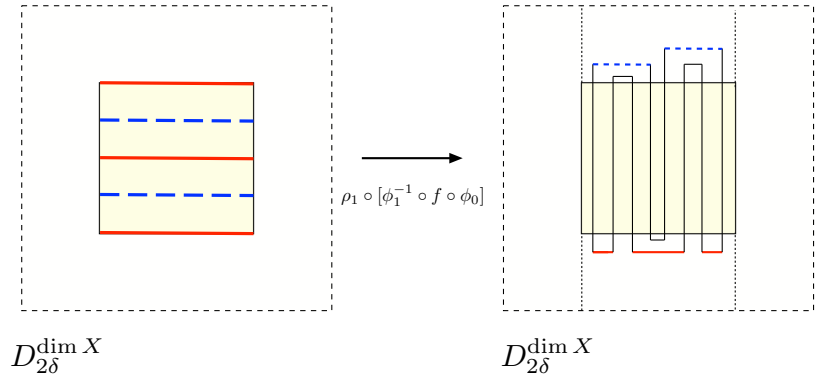}
\caption{Illustration of the isotopy creating a pseudo-horseshoe.}
\end{figure}

Now, properties (1')-(3') imply that there exists a family $\mathcal V=(V_i)_{1\leqslant i \leqslant N}$ of connected disjoint vertical strips such that
$$V_i = \Big(\rho_1\circ \phi_1^{-1}\circ f\circ \phi_0\Big)(K_i) \subset D_{\delta}^{\mathrm{dim} X}$$
for some connected subset
$$K_i \subset D_r^{\mathrm{dim} X-1}\times \left[(-1+\frac{2i}{N})\delta,\,(-1+\frac{2i+2}{N})\delta\right].$$
The isotopic perturbation $\rho_1$ of the identity can be performed so that item (5) of Definition~\ref{def:vertical}
holds, and we shall assume this is the case.
Making an extra $C^0$-perturbation supported in $D_\delta^{\mathrm{dim} X}$, if necessary, we ensure
that the vertical strips $V_i$ are $\vep$-distant apart. This separability process is feasible because $\alpha\in (0,1)$, so
$$N=\lfloor\Big(\frac1\vep\Big)^{\alpha\, \mathrm{dim} X}\rfloor < \Big(\frac1\vep\Big)^{\, \mathrm{dim} X}.$$

Let $h_1 \in \mathrm{Homeo}(X,d)$ be a homeomorphism conveying $\rho_1$ to a neighborhood of $f(P)$
and such that
$$h_1(z):=\left\{
\begin{array}{cl}
\phi_1 \circ \rho_1 \circ \phi_1^{-1}(z), & \quad \hbox{ if } z \in f(\phi_0(D_{2\delta}^{\mathrm{dim} X}))
	\\ \\
z, & \quad \hbox{ if } z \notin f(\phi_0(D^{\mathrm{dim} X}_{3\delta})).\\
\end{array}
\right.$$
By construction, the diameter of the support of $h_1$ is smaller than $3L\delta$. By the choice of $\delta$, this ensures that the homeomorphism $f_1 = h_1\circ f$ belongs to $\cW_f$, and so $D(f_1,f) \leqslant \delta_0$. Moreover, in $D_{2\delta}^{\mathrm{dim} X}$ one has
\begin{align*}
\phi_1^{-1}\circ f_1 \circ \phi_0
	 = \phi_1^{-1}\circ  h_1 \circ f  \circ \phi_0
	 =  \rho_1 \circ \phi_1^{-1} \circ  f\circ \phi_0
\end{align*}
and, consequently, $f_1$ has a $L^{-1}\vep$-separated pseudo-horseshoe of type $N$ at scale $\delta$ connecting $P$ to $f(P)$ (which may differ from $f_1(P)$). Thus, if $p=1$, the proof of Proposition~\ref{prop:exist-phs} is complete.

\bigskip

\noindent \textbf{Step 2:} Assume now that $p \geqslant 2$. By construction, the homeomorphism $f_1$ belongs to $\cW_f$, and so $f_1(B)$ is a $p$-absorbing disk for $f_1$. 
Now, by a translation in the charts $\phi_1$ and $\phi_2$ in $\mathbb R^{\mathrm{dim} X}$, which does not change the Lipschitz constant $L$, we assume without loss of generality that $\phi_1(0,0, \dots, 0)=f(P)$ and $\phi_2(0,0, \dots, 0)=f_1(f(P))$. Therefore, $\Big(\phi_2\circ f_1 \circ \phi_1^{-1}\Big)\,(0,0, \dots, 0)=(0,0, \dots, 0)$.

\medskip

Proceeding as in Step 1, we find homeomorphisms
$\rho_2 : D_1^{\mathrm{dim} X} \quad \to \quad D_{2\delta}^{\mathrm{dim} X}$
and
$$
h_2(z):=\left\{
\begin{array}{cl}
\phi_2 \circ \rho_2 \circ \phi_2^{-1} (z), & \quad \hbox{ if } z\in f_1(\phi_1(D_{2\delta}^{\mathrm{dim} X})) 
\\ \\
z, & \quad \hbox{ if } z\notin f_1(\phi_1(D^{\mathrm{dim} X}_{3\delta}))
\end{array}
\right.
$$
such that
\begin{itemize}
\item the support of $h_2$ is contained in a ball with diameter $3L\delta$ centered at $f_1(f(P))$;
\medskip
\item $f_2=h_2\circ f_1$ has a $L^{-1}\vep$-separated pseudo-horseshoe of type $N$ at scale $\delta$ connecting $f(P)$ to $f_1(f(P))$.
\end{itemize}
The support of the perturbation $h_2$ is disjoint from the one of the homeomorphism $h_1$ and has diameter smaller that $3L\delta$; thus $f_2 \in \cW_f$, and so $D(f_2,f) \leqslant \delta_0$.

\smallskip

Let us summarize what we have obtained so far. Under the two previous perturbations we have built a homeomorphism $f_2 \in \cW_f$ exhibiting two pseudo-horseshoes, one connecting $P$ to $f(P)$ and another connecting $f(P)$ to $f_1(f(P))$. Since these perturbations are performed in Euclidean coordinates (using either the charts $\phi_i$ or their modifications by rigid translations, which do not change the notions of horizontal and vertical strip), and then conveyed to the manifold $X$ using the fixed charts, we are sure that these pseudo-horseshoes are coherent.

\bigskip

\noindent \textbf{Step 3: The recursive argument.}
Set $f_0=f$. Using the previous argument recursively we obtain homeomorphisms $\{f_0, f_1, f_2, \dots, f_{p-1}\}$ such that $f_i \in \cW_f$, so clearly $D(f_i, f) \leqslant \delta_0$ for every $1 \leqslant i \leqslant p-1$; besides, $f_{p-1}$ has $L^{-1}\vep$-separated pseudo-horseshoes connecting the successive points of the finite piece of the random orbit
$$\big\{P, \,f_0(P), \,(f_1 \circ f_0)(P), \,(f_2\circ f_1\circ f_0)(P), \ldots, (f_{p-1} \circ \dots \circ f_2\circ f_1 \circ f_0)(P)\big\}.$$
If the points $(f_{p-1} \circ \dots \circ f_2\circ f_1 \circ f_0)(P)$ and $P$ are distinct, to end the proof of Proposition~\ref{prop:exist-phs} we need an extra perturbation to identify them. This last perturbation is performed in the interior of the disk $B$, so the resulting homeomorphism $g$ satisfies $D(g, f_{p-1}) \leqslant \delta_0$ and $g=f$ in $X \setminus \bigcup_{0\,\leqslant\, j\,\leqslant\, p-1} \, f^j(B)$. Therefore, $D(g, f) \leqslant D(g, f_{p-1}) + D(f_{p-1}, f) \leqslant 2\delta_0$ and $g$ has a $L^{-1}\vep$-separated pseudo-horseshoe of type $N$ at scale $\delta$ connecting the point $P$ to itself.
\end{proof}


\begin{remark}
\emph{For the construction of the pseudo-horseshoes it is essential that $\alpha$ is strictly smaller than $1$. Indeed, only if $0 < \alpha < 1$ are we able to create $\lfloor \Big(1/\vep\Big)^{\alpha \, \mathrm{dim} X}\rfloor$ points that are $\vep$-separated inside a ball with diameter $2 \delta$, since this obliges $\vep > 0$ to satisfy the condition
$\sqrt[\mathrm{dim} X]{\lfloor \Big(1/\vep\Big)^{\alpha \, \mathrm{dim} X}\rfloor} \,\,\vep  <  4 \delta$
or, equivalently, $0 < \vep < \sqrt[1-\alpha]{4 \delta}$.}
\end{remark}

\section{Proof of Theorem~\ref{thm:main}}

Firstly, we note that $\overline{\mathrm{mdim}_M}\,(X,f,d) \leqslant \mathrm{dim} X$ for every $f \in \mathrm{Homeo}(X,d)$ (cf.  \cite[\S 5]{VV}). We are left to prove the converse inequality in a residual subset of $\mathrm{Homeo}(X,d)$.

\smallskip

Fix a strictly decreasing sequence $(\vep_k)_{k \,\in \,\mathbb{N}}$ in the interval $(0,1)$ which converges to zero. For any $\alpha \in (0,1)$ and $k \in \mathbb{N}$, consider the $C^0$-open set $\mathcal O(\vep_k, \alpha)$ of the homeomorphisms $g \in \mathrm{Homeo}(X,d)$ such that $g$ has a coherent $(\delta, L\vep_k, p, \alpha)$-pseudo horseshoe, for some $\delta>0$ and $p \in \mathbb{N}$ and $L>0$.
Observe that, given $\alpha \in (0,1)$ and $K \in \mathbb{N}$, the set
$$\mathcal O_K(\alpha) := \bigcup_{\substack{k \,\, \in \,\, \mathbb{N} \\ k \,\, \geqslant\,\, K}}\,\mathcal O(\vep_k, \alpha)$$
is $C^0$-open and, by Proposition~\ref{prop:exist-phs}, nonempty.
Besides, it is $C^0$-dense in $\mathrm{Homeo}(X,d)$ since the residual $\mathcal H$ (cf. Lemma~\ref{le:absorbing}) is $C^0$-dense in the Baire space $\mathrm{Homeo}(X,d)$ and Proposition~\ref{prop:exist-phs} holds for every $f \in \mathcal H$. Define
\begin{equation*}\label{def:R}
\mathfrak{R}
	:= \bigcap_{\alpha \,\in \,\,(0,1)\, \cap\, \mathbb Q} \,\, \bigcap_{K \,\in\, \mathbb{N}} \,\mathcal O_K(\alpha).
\end{equation*}
This is a $C^0$-Baire residual subset of $\mathrm{Homeo}(X,d)$ and

\begin{lemma}\label{le:residual} $\overline{\mathrm{mdim}_M}\,(X,g,d) = \mathrm{dim} X$ for every $g \in \mathfrak R$.
\end{lemma}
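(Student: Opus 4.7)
The plan is to unpack the definition of $\mathfrak R$ to extract coherent pseudo-horseshoes of $g$ at arbitrarily fine separation scales, then to feed these into Corollary~\ref{cor:pseudo2mmd} to obtain a lower bound on $\overline{\mathrm{mdim}_M}(X,g,d)$. The opposite inequality $\overline{\mathrm{mdim}_M}(X,g,d)\leq \mathrm{dim} X$ has already been recalled at the start of this section (citing \cite{VV}), so only the lower bound needs to be addressed.

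First I would fix $g \in \mathfrak R$ and a rational $\alpha \in (0,1)$. Unwinding the definition of $\mathfrak R$, for every $K \in \mathbb N$ there must exist some index $k = k(K) \geq K$ together with parameters $\delta_K > 0$ and $p_K \in \mathbb N$ such that $g$ admits a coherent $(\delta_K, L\vep_{k(K)}, p_K, \alpha)$-pseudo-horseshoe, where $L$ denotes the uniform bi-Lipschitz constant provided by the fixed atlas $\mathfrak a$. Since $(\vep_k)_{k\in\mathbb N}$ was chosen strictly decreasing to zero, it follows that $\vep_{k(K)} \to 0$ as $K \to \infty$, so the available separation scales are arbitrarily fine.

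Next I would apply Corollary~\ref{cor:pseudo2mmd} with the parameter $\vep$ of that corollary set equal to $L\vep_{k(K)}$. This produces a constant $C>1$, depending only on the Lipschitz bound of the charts and hence uniform in $K$, for which
$$h\bigl(g, \, C^{-1} L \vep_{k(K)}\bigr) \;\geq\; \alpha \,\mathrm{dim} X \,\bigl|\log(L \vep_{k(K)})\bigr|.$$
Dividing by $\bigl|\log(C^{-1} L \vep_{k(K)})\bigr|$ and passing to the limit as $K \to \infty$, the elementary asymptotic $|\log(a\vep)|/|\log(\vep)| \to 1$ as $\vep \to 0^+$ (for any fixed constant $a>0$) absorbs the multiplicative factors $C$ and $L$, yielding
$$\overline{\mathrm{mdim}_M}(X, g, d) \;=\; \limsup_{\vep \to 0} \frac{h(g, \vep)}{|\log \vep|} \;\geq\; \alpha\,\mathrm{dim} X.$$

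Finally, since $\mathbb Q \cap (0,1)$ is dense in $(0,1)$, letting $\alpha \nearrow 1$ through rationals gives $\overline{\mathrm{mdim}_M}(X, g, d) \geq \mathrm{dim} X$, and combined with the known upper bound this produces the claimed equality. The argument is essentially routine once Corollary~\ref{cor:pseudo2mmd} is available; the only mildly delicate step is confirming that the multiplicative constants $C$ and $L$ introduced at the level of separation scales vanish after logarithmic normalization, which is the elementary observation recorded above.
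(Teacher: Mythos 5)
Your proposal is correct and follows essentially the same route as the paper's own proof: unwind the definition of $\mathfrak R$ to obtain, for each rational $\alpha\in(0,1)$, coherent pseudo-horseshoes of $g$ at arbitrarily small separation scales, feed these into Corollary~\ref{cor:pseudo2mmd}, and let $\alpha\nearrow 1$. The only (welcome) difference is that you track the multiplicative constants $C$ and $L$ explicitly and note that $|\log(a\vep)|/|\log\vep|\to 1$ absorbs them, whereas the paper glosses over this normalization; your write-up is marginally more careful but not a different argument.
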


\begin{proof} Take $g \in \mathfrak R$.
Given a rational number $\alpha \in (0,1)$ and a positive integer $K$, the homeomorphism $g$ has a coherent $(\delta, L\vep_{j_K}, p,\alpha)$-pseudo-horseshoe for some $j_K \geqslant K$, $\delta>0$, $p \in \mathbb{N}$ and $L>0$. Therefore, by Corollary~\ref{cor:pseudo2mmd},
$$\limsup_{n\,\to\,+\infty} \,\frac1n \,\log \,s(g, n, L\vep_{j_K}) \geqslant \alpha \, \mathrm{dim} X \,|\log \vep_{j_K}|$$
for a subsequence $(\vep_{j_K})_{K \,\in \,\mathbb{N}}$ of $(\vep_k)_{k \,\in \,\mathbb{N}}$. Thus,
\begin{eqnarray*}
\overline{\mathrm{mdim}_M}\,(X,g,d) \geqslant  \limsup_{k \, \to \, +\infty}\, \frac{\limsup_{n\, \to \, +\infty}\,
	\frac1n \,\log \,s\left(g, n, L\vep_k\right)}{|\log \vep_k|} \geqslant \alpha\, \mathrm{dim} X.
\end{eqnarray*}
As $\alpha\in (0,1) \cap \mathbb Q$ is arbitrary, Theorem~\ref{thm:main} is proved.
\end{proof}

\begin{remark}
\emph{The assumption that the manifold $X$ has no boundary is not essential. Allowing boundary points we need to alter the argument to prove Proposition~\ref{prop:exist-phs} on two instances. Firstly, absorbing disks must be considered with respect to the induced topology. Secondly, the role of Brouwer fixed point theorem is transferred to the $C^0$-closing lemma, which also ensures the existence of a periodic point. In case this periodic point lies at the boundary of the manifold, an additional $C^0$-arbitrarily small perturbation yields a close homeomorphism with an interior periodic point. Accordingly, we are obliged to change the closeness estimate on the statement of Proposition~\ref{prop:exist-phs}, by replacing $2\delta_0$ by $3\delta_0$.}
\end{remark}

\section{Proof of Theorem~\ref{thm:main2}}\label{se:Proof of Thm B}

We will start constructing piecewise affine continuous models with any prescribed metric mean dimension. Afterwards we will prove the theorem using surgery in the space of continuous maps on the interval.

\subsection{Piecewise affine models}

Denote by $d$ the Euclidean metric in $[0,1]$ and by $C^0([0,1])$ the space of continuous maps on the interval $[0,1]$ with the uniform metric. We start describing examples in $C^0([0,1])$ with metric mean dimension equal to any prescribed value $\beta \in [0,1]$.

\begin{proposition}
For every $\beta \in [0,1]$ there exists a piecewise affine function $f_\beta  \in C^0([0,1])$ such that $f_\beta(0)=0$, $f_\beta(1)=1$ and $\,\,\mathrm{mdim}_M\,( [0,1], f_\beta, d) = \beta$.
\end{proposition}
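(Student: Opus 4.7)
My plan is to construct $f_\beta$ as a piecewise affine map that is essentially the identity except on a sequence of disjoint subintervals on which $f_\beta$ coincides with a full "tent/horseshoe" of carefully calibrated size and combinatorial complexity.

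\textbf{Setup for $\beta=0$ and $\beta=1$.} For $\beta=0$ the identity on $[0,1]$ is Lipschitz so property (1) of Section~\ref{sec:definitions} gives $\mathrm{mdim}_M\,( [0,1], \mathrm{id}, d)=0$. For $\beta=1$ we borrow the construction of \cite{VV}: a piecewise affine map with $N_n$-teeth tent horseshoes supported on subintervals of length $\ell_n$, tuned so that $\log N_n/|\log(\ell_n/N_n)|\to 1$, e.g.\ $N_n\sim 2^{2^n}$ and $\ell_n=2^{-n}$. We can therefore focus on $\beta\in(0,1)$.

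\textbf{The main construction.} Pick a sequence of pairwise disjoint closed intervals $I_n=[a_n,a_n+\ell_n]\subset(0,1)$ converging to some point (say $1$), with $\ell_n\to 0$. On each $I_n$ let $T_n\colon I_n\to I_n$ be the standard piecewise affine tent map with $N_n$ "teeth": $T_n$ is made of $N_n$ affine branches, each mapping a subinterval of length $\ell_n/N_n$ onto all of $I_n$, with alternating slopes $\pm N_n$. Glue the $T_n$ together with the identity outside $\bigcup_n I_n$ to obtain a continuous piecewise affine $f_\beta$ with $f_\beta(0)=0$ and $f_\beta(1)=1$. The fixed-point structure guarantees continuity at the accumulation point. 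The key is the choice
\[
N_n \;=\; \bigl\lfloor \ell_n^{-\beta/(1-\beta)}\bigr\rfloor,
\]
so that writing $\varepsilon_n=\ell_n/N_n$ one computes $\dfrac{\log N_n}{|\log\varepsilon_n|}=\dfrac{\log N_n}{\log N_n+|\log\ell_n|}\longrightarrow\beta$.

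\textbf{Lower bound $\underline{\mathrm{mdim}_M}\geq\beta$.} Each $T_n$ admits a full shift on $N_n$ symbols as a semiconjugate factor: the $N_n$ monotone branches produce, by standard symbolic coding of piecewise monotone interval maps, an $(n,\varepsilon)$-separated set of cardinality $N_n^k$ at spatial scale $\varepsilon\leq\ell_n/N_n=\varepsilon_n$, for every $k\in\mathbb N$. Hence $h(f_\beta,\varepsilon_n)\geq\log N_n$, which yields
\[
\underline{\mathrm{mdim}_M}\,([0,1],f_\beta,d)\;\geq\;\liminf_{n\to\infty}\frac{\log N_n}{|\log\varepsilon_n|}\;=\;\beta.
\]

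\textbf{Upper bound $\overline{\mathrm{mdim}_M}\leq\beta$.} This is the step I expect to be the main obstacle, because the standard entropy upper bound via lap counting is infinite here. The workaround is to bound laps \emph{at a given scale}. For $\varepsilon>0$ let $n(\varepsilon)$ be the largest index with $\varepsilon_n\geq\varepsilon$; pieces $T_m$ with $m>n(\varepsilon)$ contribute tent maps on intervals of diameter $\ell_m\leq\varepsilon$, so any two points inside such an $I_m$ are not $\varepsilon$-separated. Consequently, any $(n,\varepsilon)$-separated set can keep at most one representative inside each such small $I_m$, and the effective dynamics at scale $\varepsilon$ has a number of monotone laps bounded by
\[
L(\varepsilon)\;=\;C+\sum_{m\leq n(\varepsilon)} N_m\;\leq\;C'\,N_{n(\varepsilon)}
\]
by the super-geometric growth of $N_n$ (one chooses $\ell_n$ decreasing fast enough, e.g.\ $\ell_n=2^{-2^n}$, so the last $N$ dominates the sum). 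The classical Misiurewicz--Szlenk estimate then gives $s(f_\beta,n,\varepsilon)\leq C''\,L(\varepsilon)^n/\varepsilon$, whence
\[
h(f_\beta,\varepsilon)\;\leq\;\log L(\varepsilon)\;\leq\;\log N_{n(\varepsilon)}+O(1),
\]
and the definition of $n(\varepsilon)$ together with $\varepsilon<\varepsilon_{n(\varepsilon)}$ yields $h(f_\beta,\varepsilon)/|\log\varepsilon|\to\beta$.

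\textbf{Conclusion.} Combining both bounds, $\mathrm{mdim}_M\,([0,1],f_\beta,d)$ exists and equals $\beta$. The most delicate point is calibrating the two parameter sequences $(\ell_n,N_n)$ so that the $\liminf$ in the lower bound and the $\limsup$ in the upper bound both land on the same value $\beta$; this is handled by choosing $\ell_n$ decreasing sufficiently rapidly (in particular $|\log\ell_{n+1}|\gg|\log\ell_n|$) so that $\log L(\varepsilon)$ is genuinely governed by a single dominant $\log N_{n(\varepsilon)}$ and so the ratio converges instead of just having $\beta$ as a $\liminf$.
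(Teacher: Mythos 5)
Your construction (disjoint intervals $I_n$ carrying full tent maps with $N_n$ branches, glued to the identity) is morally the same as the paper's (intervals $J_k$ carrying sparse piecewise-affine horseshoes), and your calibration is equivalent to theirs: writing $\varepsilon_n=\ell_n/N_n$, your choice $N_n=\lfloor\ell_n^{-\beta/(1-\beta)}\rfloor$ is exactly $N_n\approx\varepsilon_n^{-\beta}$, which matches the paper's $i_k=\lfloor\varepsilon_k^{-\beta}\rfloor$. The upper bound $\overline{\mathrm{mdim}_M}\leq\beta$ also rests on essentially the same two ingredients in both proofs: the lap count of the individual tent/horseshoe (which bounds $h(T_n,\varepsilon)$ by $\log N_n$) and the diameter of the invariant set it lives on (which bounds $h(T_n,\varepsilon)$ by $\log(\ell_n/\varepsilon)$). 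So far so good.

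The serious problem is in your last paragraph, and it is not a minor calibration issue — your suggested fix points in exactly the wrong direction. You write $\underline{\mathrm{mdim}_M}\geq\liminf_n\log N_n/|\log\varepsilon_n|$, but $\underline{\mathrm{mdim}_M}$ is a $\liminf$ over \emph{all} $\varepsilon\to0$, not over the subsequence $\varepsilon_n$. To promote the subsequence bound to a genuine $\liminf$ one must control the ratio $h(f_\beta,\varepsilon)/|\log\varepsilon|$ for $\varepsilon\in(\varepsilon_{n+1},\varepsilon_n)$. For such $\varepsilon$, $T_n$ still contributes $\log N_n$ but this is compared to a larger $|\log\varepsilon|$, while $T_{n+1}$ only contributes $\log(\ell_{n+1}/\varepsilon)$ (its branches are no longer individually resolvable); the maximum of these two quantities dips to roughly $\beta\bigl(1+\tfrac{1-\beta}{\beta}\cdot\tfrac{|\log\ell_{n+1}|}{|\log\ell_n|}\bigr)^{-1}|\log\varepsilon|$ at the crossover scale. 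If $|\log\ell_{n+1}|\gg|\log\ell_n|$ — as you propose with $\ell_n=2^{-2^n}$ — this dip tends to $0$, so you would obtain $\underline{\mathrm{mdim}_M}=0\neq\beta$ (and $\overline{\mathrm{mdim}_M}=\beta$ only). What is actually needed is the \emph{opposite} regime, $|\log\ell_{n+1}|/|\log\ell_n|\to1$ (e.g.\ $\ell_n=a^n$ geometric), so that consecutive scales $\varepsilon_n$ are dense in log-scale and the dip tends to $\beta$. Note also that a geometric $\ell_n$ automatically makes $N_n$ geometric and hence $\sum_{m\leq n}N_m=O(N_n)$, so the lap-sum domination you were worried about holds anyway and never required a super-exponential decay.

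A smaller but real gap: your upper-bound step asserts that tent maps $T_m$ with $m>n(\varepsilon)$ sit on intervals of diameter $\ell_m\leq\varepsilon$. This does not follow from $\varepsilon_m<\varepsilon$: since $\ell_m=\varepsilon_m^{1-\beta}>\varepsilon_m$, for $\varepsilon$ just above $\varepsilon_{n(\varepsilon)+1}$ one always has $\ell_{n(\varepsilon)+1}\gg\varepsilon$. The clean version of the argument is not a global ``effective lap count'' $L(\varepsilon)$ but the decomposition of $\Omega(f_\beta)$ into the disjoint $f_\beta$-invariant pieces $I_n$ (plus fixed points), giving $h(f_\beta,\varepsilon)=\max_n h(T_n,\varepsilon)\leq\max_n\min\{\log N_n,\log(\ell_n/\varepsilon)_+\}$; the max of the min over the real variable $n$ is $\beta|\log\varepsilon|$, and the discrete max lands within a bounded additive error of this when $\ell_n$ is geometric, giving both $\overline{\mathrm{mdim}_M}\leq\beta$ and, together with the horseshoe lower bound, $\underline{\mathrm{mdim}_M}\geq\beta$.
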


\begin{proof} If $\beta=0$, the assertion is trivial: take for instance $f_\beta = \text{ identity map}$. Now, fix $\beta \in (0,1]$, take $a_0 = a_{-1} = 1$ and consider a sequence $(a_k)_{k\in \mathbb{N}}$ of numbers in $(0,1)$ strictly decreasing to zero. For any $k \geqslant 0$, consider the interval
$$J_k=[a_{2k+1},\, a_{2k}]$$
denote by $\gamma_k$ the diameter $a_{2k}-a_{2k+1}$ of $J_k$ and fix a point $b_{k+1}$ of the interval $(a_{2k+2}, \, a_{2k+1})$. Let $G_k:=[a_{2k+2}, \,a_{2k+1}]$ 
be the closed interval gap between $J_k$ and $J_{k+1}$.
\smallskip

On each interval $G_k$, define $f_\beta$ as a continuous piecewise affine map which maps the interval $[a_{2k+2}, a_{2k+1}]$ onto itself, fixes the boundary points and has an attracting fixed point at $b_k$ whose topological basin of attraction contains all points in the interval $(a_{2k+2}, \,a_{2k+1})$. By construction the set $\bigcup_{k\geqslant 0} G_k$ is $f_\beta$-invariant, restricted to which $f_\beta$ has zero topological entropy; hence this compact set will not contribute to the metric mean dimension of $f_\beta$.

\smallskip

We now define the map $f_\beta$ on the set $\bigcup_{k\geqslant 0} J_k$. Let $(\ell_k)_{k\geqslant 0}$ be a strictly increasing sequence of positive odd integers such that $\ell_0\geqslant 3$. Fix $k\geqslant 0$ and subdivide the interval $J_k$ in $\ell_k$ sub-intervals $(J_{k,i})_{1\leqslant i \leqslant \ell_k}$ of equal size $\gamma_k/\ell_k$, where $\gamma_k=a_{2k} - a_{2k+1}$. For each $0\leqslant i < \ell_k$, set
$$c_{k,i}:= a_{2k+1} + i \,\frac{\gamma_k}{\ell_k}.$$
Afterwards define
\begin{equation}\label{eq:f-Jk}
f_\beta(x) :=
\begin{cases}
\frac{\ell_k}{\gamma_k}\, (x - c_{k,i}) + a_{2k+1}, 		& \; \text{if} \; x\in J_{k, 1 + 4i}, \quad 0\leqslant i \leqslant i_k  \\ \\
- \frac{\ell_k}{\gamma_k} \,(x - c_{k,i}) + a_{2k}, 		& \;  \text{if} \; x\in J_{k, 3 + 4i}, \quad 0\leqslant i \leqslant  i_k-1
\end{cases}
\end{equation}
where $1 \leqslant i_k\leqslant \ell_k$ is given by 
\begin{equation}\label{eq:ik}
i_k := \Big\lfloor \left(\frac{\ell_k}{\gamma_k}\right)^\beta \Big\rfloor.
\end{equation}

\begin{figure}[h]\label{fig2}
\includegraphics[scale=.25]{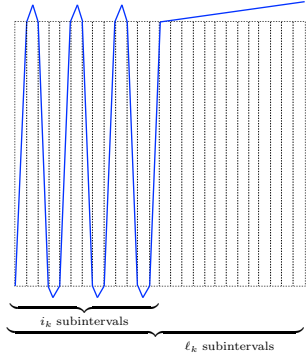}
\caption{Selection procedure of piecewise linear components of $f_\beta$.}
\end{figure}

In rough terms, we have defined $f_\beta$ on each interval $J_k$ as a piecewise affine self map taking values on $G_{k} \cup J_k\cup G_{k-1}$ in such a way that it has a metric mean dimension close to $\beta$ at a certain scale. Notice that this construction is entirely analogous to the generation process of a $(\delta,\vep,p,\alpha)$-pseudo-horseshoe in Section~\ref{sec:prelim}, taking $\delta=\gamma_k$, $\vep=\gamma_k/\ell_k$, $p=1$ and $\alpha=\beta$. In particular, having such a pseudo-horseshoe is a $C^0$-open condition.

\smallskip

In the remaining sets
\begin{equation}\label{eq:r1k}
\Big(\bigcup_{0\,\leqslant \,i \,<\,\frac{4i_k-1}{4} } J_{k,2+4i} \Big)\; \bigcup \; \Big(\bigcup_{1+4i_k \,< \,i \,\leqslant \,\ell_k} J_{k,i}\Big) \; \bigcup \; \Big(\bigcup_{0\,\leqslant\, i\, <\,\frac{4i_k-3}{2}} J_{k,4+4i}\Big)
\end{equation}
we define $f_\beta$ as a piecewise affine map preserving the boundary points in such a way that the sets ~\eqref{eq:r1k} are mapped inside the regions $G_{k-1}$ and $G_{k}$, respectively (see e.g. Figure~\ref{fig1d}). By construction, the map $f_\beta$ is continuous, piecewise affine and fixes the points $0$ and $1$.

\bigskip

\noindent \textbf{Claim}: \emph{If the sequences $(a_k)_{k \in \mathbb{N}}$ and $(\ell_k)_{k \in \mathbb{N}}$ satisfy the additional condition
\begin{equation}\label{eq:def-ak-vs2}
a_{2k} = \frac{a_{2k-2}-a_{2k-1}}{\ell_{k-1}}
\quad \quad \forall k \in \mathbb{N}
\end{equation}
then $\mathrm{mdim}_M\,([0,1], f_\beta,  d)=\beta$.}

\bigskip

Indeed, given $\vep>0$ smaller than $\frac{a_{2}-a_{3}}{\ell_{1}}$, let $k=k(\vep) \in \mathbb{N}$ be the largest positive integer such that
$$\vep < \vep_k:=\frac{a_{2k}-a_{2k+1}}{\ell_{k}}.$$
Thus $\vep_{k+1} \leqslant \vep < \vep_k$, and so the assumption \eqref{eq:def-ak-vs2} ensures that $a_{2k+2} = \vep_{k+1}
\leqslant \vep$. Therefore,
\begin{equation*}\label{corB1}
f_\beta([0,a_{2k+2}]) \subset [0,a_{2k+2}] \subset [0,\vep]
\end{equation*}
and, as $\vep < \vep_k$, for every $n \in \mathbb{N}$ one has
\begin{equation}\label{corB2}
s(f_\beta, n, \vep)	\geqslant
s({f_\beta}_{\mid J_k^\infty}, n, \vep)	\geqslant s({f_\beta}_{\mid J_k^\infty}, n, \vep_k)
= \Big\lfloor \left(\frac{\ell_k}{\gamma_k}\right)^\beta \Big\rfloor^n
= \Big\lfloor \frac1{\vep_k^\beta} \Big\rfloor^n
\geqslant \Big\lfloor \frac1{\vep^\beta} \Big\rfloor^n
\end{equation}
where $J_k^\infty:=\bigcap_{i\ge 0} f_\beta^{-i}(J_k)$.
Consequently, as $n$ is arbitrary
\begin{equation*}\label{corB3}
\underline{\mathrm{mdim}_M}\,([0,1], f_\beta,  d) \geqslant \beta.
\end{equation*}
Before proceeding, notice that the sequences $(a_k)_{k\in \mathbb{N}}$ and $(\ell_k)_{k \in \mathbb{N}}$ may be chosen complying with the condition \eqref{eq:def-ak-vs2}.

\begin{figure}[h]\label{fig1d}
\includegraphics[scale=.5]{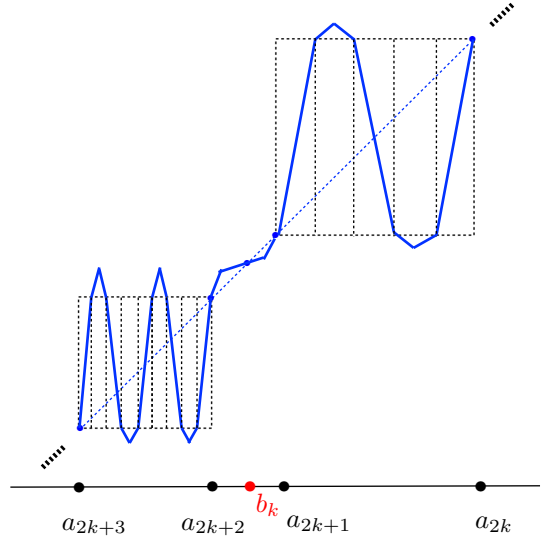}
\caption{Local construction of an attractor between two consecutive pseudo-horseshoes.}
\end{figure}

On the other hand, by construction the derivative of $f_\beta$ at the points of the intersection $J_k \cap f_\beta^{-1}(J_k) \cap  \ldots \cap f_\beta^{-(n-1)}(J_k)$ is constant and equal to $\gamma_k/\ell_k$. Thus, this set
is formed by $(\frac{\gamma_k}{\ell_k})^n$ disjoint and equally spaced subintervals. Moreover, any such subinterval is the $(n,\vep_k/2)$-dynamical ball associated to its mid-point. Therefore, every $(n,\vep)$-dynamical ball of $f_\beta$ which is contained in an $(n,\vep_k)$-dynamical ball inside $J_k \cap f_\beta^{-1}(J_k) \cap  \ldots \cap f_\beta^{-(n-1)}(J_k)$ has diameter smaller or equal to $\vep \,(\frac{\gamma_k}{\ell_k})^n$ (actually equal when dynamical balls do not intersect the boundary of the connected components of $J_k \cap f_\beta^{-1}(J_k) \cap  \ldots \cap f_\beta^{-(n-1)}(J_k)$). This implies in particular that
\begin{equation}\label{eq:estimate}
s({f_\beta}_{\mid J_k^\infty}, n,\vep) \leqslant s({f_\beta}_{\mid J_k^\infty}, n,\vep_k) \cdot \frac{\vep_k\,(\frac{\gamma_k}{\ell_k})^n}{\vep\,(\frac{\gamma_k}{\ell_k})^n}
	= s({f_\beta}_{\mid J_k^\infty}, n, \vep_k) \cdot \frac{\vep_k}{\vep}
	= \Big\lfloor \frac1{\vep_k^\beta} \Big\rfloor^n \cdot \frac{\vep_k}{ \vep}
\end{equation}
and so
$$\limsup_{n\,\to\,\infty}\, \frac1n \log \,s({f_\beta}_{\mid J_k^\infty}, n,\vep) \leqslant \beta |\log \vep_k| \leqslant \beta |\log \vep|.$$
Furthermore, if $1 \leqslant t < k$ then \eqref{eq:estimate} also implies that
$$s({f_\beta}_{\mid J_t^\infty},n,\vep) \leqslant s({f_\beta}_{\mid J_t^\infty},n,\vep_t) \cdot \frac{\vep_t}{\vep}$$
which yields
$$\limsup_{n\,\to\,+\infty}\, \frac1n \log \,s({f_\beta}_{\mid J_t^\infty}, n,\vep) \leqslant \beta |\log \vep_t| \leqslant \beta |\log \vep|.$$
Since $\vep$ may be taken arbitrarily small, we conclude that
$$\overline{\mathrm{mdim}_M}\,([0,1], f_\beta,  d) \leqslant \beta.$$
Thus, $\mathrm{mdim}_M\,([0,1], f_\beta,  d) = \beta.$ This completes the proofs of the claim and of the proposition.
\end{proof}


\subsection{Level sets of the metric mean dimension}

Let us now show that for every $\beta \in [0,1]$ there exists a $C^0$-dense subset $\mathcal D_\beta\subset C^0([0,1])$ such that $\mathrm{mdim}_M\,([0,1], f, d)=\beta$ for every $f \in \mathcal D_\beta.$

\smallskip

When $\beta=0$ it is enough to take $\mathcal D_0=C^1[0,1]$, which is a $C^0$-dense subset of $C^1[0,1]$. Indeed, for any $C^1$ interval map $f$ one has $h_{\mathrm{top}}(f) \leqslant \log \|f'\|_{\infty} < +\infty$ and, consequently, $\mathrm{mdim}_M\,([0,1], f, d)=0$.

\smallskip

Fix $0 < \beta \leqslant 1$ and $f \in C^0([0,1])$, and let $\vep>0$ be arbitrary. We claim that there exists $h\in C^0([0,1])$ such that $D(f,h)<\vep$ and $\mathrm{mdim}_M\,([0,1], h,  d)=\beta$. The proof is done through a local perturbation starting at the space of $C^1$-interval maps as we will explain. Firstly, by the denseness of the $C^1$-interval maps we may choose $h_1 \in C^1([0,1])$ so that $D(h_1,h)<\frac\vep3$. Secondly, if $P$ denotes a fixed point of $h_1$ (which surely exists), let $h_2 \in C^0([0,1])$ be such that $D(h_2,h_1)<\frac\vep3$ and whose set of fixed points in a small neighborhood of $P$ consists of an interval $J$ centered at $P$. This $C^0$-perturbation can be performed in such a way that $h_2$ is $C^1$ at all points except, possibly, the extreme points of $J$. Finally, if $\widehat J \subsetneq \tilde J \subsetneq J$ and $\widehat J, \tilde J$ are intervals of diameter smaller than $\vep/3$, we take a $C^1$ map $\chi$ such that $\chi\equiv 1$ on $\widehat J$ and  $\chi\equiv 0$ on $[0,1] \setminus \tilde J$.

\smallskip

Let $T_\lambda$ denote the homothety of parameter $\lambda \in (0,1)$ and $|\widehat J|$ stand for the diameter of the interval $\widehat J$. Since $\{\chi, 1-\chi\}$ is a partition of unity, the map
\begin{equation}\label{eq:h3}
h_3:= h_{3,\beta}= (1-\chi) \cdot h_2 + \chi\cdot  T_{|\widehat J|} \circ f_\beta\circ T_{|\widehat J|^{-1}}
\end{equation}
is continuous, 
coincides with $h_2$ on $[0,1] \setminus \tilde J$ and is linearly conjugate to $f_\beta$ on the interval $\widehat J$. Moreover, by the uniform continuity of $h_2$ we can choose $h_3$ so that $D(h_3,h_2)<\frac\vep3$ provided that
$\widehat J,\tilde J$ are small enough. This guarantees that $D(h_3,h)<\vep$ and, since all maps in the combination \eqref{eq:h3} but $f_\beta$ are smooth (except possibly at two points), then
$$\mathrm{mdim}_M\,([0,1], h_3,  d) = \mathrm{mdim}_M\,([0,1], f_\beta,  d)= \beta.$$
This ends the proof of the first part of Theorem~\ref{thm:main2}.

\begin{remark} \emph{The case $\beta=1$ has been considered in \cite[Proposition 9]{VV}.}
\end{remark}

Regarding the last statement of Theorem~\ref{thm:main2}, we might argue as in the proof of Theorem~\ref{thm:main}. However, as we have established that $\mathcal D_1$ is $C^0$-dense in $C^0([0,1])$, the reasoning can be simplified (observe that the case $\alpha=1$ was not considered in Theorem~\ref{thm:main}).

\smallskip

Take a strictly decreasing sequence $(\vep_k)_{k \,\in \,\mathbb{N}}$ in the interval $(0,1)$ converging to zero. Given $K \in \mathbb{N}$, consider the non-empty $C^0$-open set
$$
\mathcal D_{K} = \Big\{g \in C^0([0,1]) \colon\,\, \text{$g$ has a $(\gamma, \vep_k, 1, 1)$-pseudo-horseshoe, for some $k \geqslant K$ and $\gamma>0$}\Big\}.
$$
Notice that $\mathcal D_{K}$ is $C^0$-dense in $C^0([0,1])$ by the first part of Theorem~\ref{thm:main2}. Define
$$\mathfrak{D} := \bigcap_{K \,\in\, \mathbb{N}} \,\mathcal D_{K}.$$
This is a $C^0$-Baire residual subset of $C^0([0,1])$. Besides, $\overline{\mathrm{mdim}_M}\,([0,1],g,d) = 1$ for every $g \in \mathfrak{D}$. Indeed, given a positive integer $K$, such a map $g$ has a $(\gamma_{j_K}, \vep_{j_K}, 1,1)$-pseudo-horseshoe for some $j_K \geqslant K$ and $\gamma_{j_K} > 0$. Therefore, an estimate analogous to \eqref{corB2} indicates that, for a subsequence $(\vep_{j_K})_{K \,\in \,\mathbb{N}}$ of $(\vep_k)_{k \,\in \,\mathbb{N}}$, one has
$$\limsup_{n\,\to\,+\infty} \,\frac1n \,\log \,s(g, n, \vep_{j_K}) \geqslant \,|\log \vep_{j_K}|.$$
Thus,
$$\overline{\mathrm{mdim}_M}\,([0,1],g,d) \geqslant  \limsup_{k \, \to \, +\infty}\, \frac{\limsup_{n\, \to \, +\infty}\,
	\frac1n \,\log \,s\left(g, n, \vep_k\right)}{|\log \vep_k|} \geqslant 1
$$
and so $\overline{\mathrm{mdim}_M}\,([0,1],g,d)=1$.

\end{document}